\newcommand{\PSig}{P_{\Sigma_h}}
\newcommand{\PSigo}{P_{\0\Sigma_h}}
\newcommand{\PV}{P_{\0V_h}}
\newcommand{\PS}{P_{S_h}}
\newcommand{\PiV}{\varPi_h^V}
\newcommand{\ignore}[1]{}
\numberwithin{figure}{section}
\numberwithin{table}{section}
\newcommand\pd[2]{\frac{\partial#1}{\partial #2}}
\newcommand\grad{\operatorname{grad}}
\renewcommand\div{\operatorname{div}}
\newcommand\curl{\operatorname{curl}}
\newcommand\rot{\operatorname{rot}}
\newcommand\Lap{\operatorname{\Delta}}
\newcommand\gradh{\operatorname{\grad}_h}
\newcommand\gradoh{\operatorname{\grad}^\circ_h}
\newcommand\R{\mathbb{R}}
\newcommand\x{\times}
\renewcommand\P{{\mathcal P}}
\newcommand\Th{{\mathcal T}_h}
\newcommand\Hdiv{H(\div)}
\newcommand\Hrot{H(\rot)}
\newcommand\Hodiv{\0H(\div)}
\newcommand\Horot{\0H(\rot)}
\newcommand{\<}{\langle}
\renewcommand{\>}{\rangle}
\newcommand{\0}{\ring}
\newcommand\uu{\boldsymbol u}
\newcommand\vv{\boldsymbol v}
\newcommand\VV{V}
\newcommand\ww{\boldsymbol w}
\newcommand\ff{\boldsymbol f}
\newcommand\nn{\boldsymbol n}
\newcommand\ssb{\boldsymbol s}
\definecolor{DarkRed}{rgb}{0.45,0.08,0.0}
\definecolor{DarkBlue}{rgb}{0,0.08,0.45}
\numberwithin{equation}{section}
\newtheorem{thm}{Theorem}[section]
\newtheorem{lem}[thm]{Lemma}
\newtheorem{cor}[thm]{Corollary}
\theoremstyle{remark}
\newtheorem*{remark}{Remark}
\begin{document}
\title[Mixed Finite Element Approximation of the Vector Laplacian]
{Mixed finite element approximation of the vector Laplacian with Dirichlet boundary conditions}
\author{Douglas N. Arnold}
\address{Institute for Mathematics and its Applications
and School of Mathematics,
University of Minnesota, Minneapolis, MN 55455}
\email{arnold@ima.umn.edu}
\urladdr{http://www.ima.umn.edu/\char'176arnold/}
\thanks{}
\author{Richard S. Falk}
\address{Department of Mathematics,
Rutgers University, Piscataway, NJ 08854}
\email{falk@math.rutgers.edu}
\urladdr{http://www.math.rutgers.edu/\char'176falk/}
\thanks{The work of the first author was supported in part by NSF grant 
DMS-0713568.
The work of the second author was supported in part by NSF grant DMS-0910540.
The work of the third author was supported in part by NSF grant DMS-1014817.
This work was primarily carried out at the Institute for Mathematics and its
Applications at the University of Minnesota where the authors were visiting.
}
\author{Jay Gopalakrishnan}
\address{Portland State University, PO Box 751, Portland, 
OR 97207-0751}
\email{gjay@pdx.edu}
\urladdr{http://web.pdx.edu/~gjay/}

\thanks{}
\keywords{vector Laplacian, Hodge Laplacian, mixed finite elements}
\subjclass[2000]{Primary: 65N30}
\thanks{}
\begin{abstract}
  We consider the finite element solution of the vector Laplace equation on a
  domain in two dimensions.  For various choices of boundary conditions, it is
  known that a mixed finite element method, in which the rotation of the
  solution is introduced as a second unknown, is advantageous, and appropriate
  choices of mixed finite element spaces lead to a stable, optimally
  convergent discretization.  However, the theory that leads to these
  conclusions does not apply to the case of Dirichlet boundary conditions, in
  which both components of the solution vanish on the boundary.  We show, by
  computational example, that indeed such mixed finite elements do not perform
  optimally in this case, and we analyze the suboptimal convergence that does
  occur.  As we indicate, these results have implications for the solution of
  the biharmonic equation and of the Stokes equations using a mixed
  formulation involving the vorticity.
\end{abstract}

\maketitle

\section{Introduction}\label{sec:intro}

We consider the vector Laplace equation (Hodge Laplace equation for 1-forms)
on a two-dimensional domain $\Omega$.  That is, given a vector field $\ff$ on
$\Omega$, we seek a vector field $\uu$ such that
\begin{equation}
\label{vec-Lap}
\curl \rot \uu - \grad \div \uu = \ff \quad \text{in } \Omega.
\end{equation}
(Notations are detailed at the end of this introduction.)
A weak formulation of a boundary value problem for this equation
seeks the solution $\uu$ in a subspace $H\subset \Hrot\cap \Hdiv$
satisfying
\begin{equation}
 \label{weak-form}
(\rot\uu ,\rot \vv) + (\div\uu,\div\vv) = (\ff,\vv), \quad \vv\in H.
\end{equation}
If $H$ is taken to be $\Horot\cap \Hdiv$, the variational formulation implies
the equation \eqref{vec-Lap} together with the electric boundary conditions
\begin{equation}\label{ebc}
\uu\cdot\ssb =0, \quad \div\uu=0 \quad\text{on
 $\partial\Omega$}.
\end{equation}
Magnetic boundary conditions, $\uu\cdot\nn =0$, $\rot\uu=0$, result if instead
the subspace $H$ in the weak formulation is taken to be $\Hrot\cap \Hodiv$.
(The terms electric and magnetic derive from the close relation of the Hodge
Laplacian and Maxwell's equations.)  If the domain $\Omega$ is
simply-connected, both these boundary value problems are well-posed.
(Otherwise, $H$ contains a finite dimensional subspace consisting of vector
fields which satisfy the boundary conditions and have vanishing rotation and
divergence with dimension equal to the number of holes in the domain, and each
problem can be rendered well-posed by replacing $H$ with the orthogonal
complement of this space.)

Even when the domain is simply connected, finite element methods based on
\eqref{weak-form} are problematic.  For example, on a non-convex polygon, approximations
using continuous piecewise linear functions converge to a function different
from the solution of the boundary value.  See
\cite[\S~2.3.2]{bulletin} for more details.  A convergent finite
element method can be obtained by discretizing a \emph{mixed} formulation with
a \emph{stable} choice of elements.  The mixed formulation for the electric
boundary value problem seeks $\sigma\in H^1$, $\uu\in \Hdiv$ such that
\begin{gather*}
(\sigma, \tau) - (\uu, \curl \tau) =0, \quad \tau \in H^1,
\\
(\curl \sigma, \vv) + (\div \uu, \div \vv) = (\ff,\vv), \quad
\vv \in \Hdiv.
\end{gather*}
On a simply connected domain, this problem has a unique solution for any
$L^2$ vector field $\ff$; $\uu$ solves \eqref{vec-Lap} and \eqref{ebc} and
$\sigma=\rot u$.  To discretize, we choose finite element
spaces $\Sigma_h\subset H^1$, $\VV_h \subset H(\div)$, 
indexed by a sequence of positive numbers $h$ tending
to $0$,
and determine
 $\sigma_h \in \Sigma_h$, 
$\uu_h \in {\VV}_h$ by
\begin{gather}\label{mm1}
(\sigma_h, \tau) - (\uu_h, \curl \tau) =0, \quad \tau \in \Sigma_h,
\\\label{mm2}
(\curl \sigma_h, \vv) + (\div \uu_h, \div \vv) = (\ff,\vv), \quad
\vv \in {\VV}_h.
\end{gather}
In order to obtain a stable numerical method, the finite element spaces
$\Sigma_h$ and $\VV_h$ must be chosen appropriately.  A stable method is
obtained by choosing $\Sigma_h$ to be the Lagrange elements of any degree
$r\ge 1$ and $\VV_h$ to be the Raviart--Thomas elements of the same degree $r$
(where the case $r=1$ refers to the lowest order Raviart--Thomas elements).
In the notation of \cite{bulletin}, $\Sigma_h\x \VV_h=\P_r\Lambda^0\x
\P_r^-\Lambda^1$ and the hypotheses required by \cite{bulletin} (the spaces
belong to a subcomplex of the Hilbert complex
$H^1\xrightarrow{\curl}\Hdiv\xrightarrow{\div}L^2$ with bounded cochain
projections) are satisfied.  From this it follows that the mixed finite element
method is stable and convergent.  Similar considerations apply to the magnetic
boundary value problem, where the finite element spaces are
$\0\Sigma_h=\Sigma_h\cap\0H^1$ and $\0\VV_h=\VV_h\cap\Hodiv$ and the relevant
Hilbert complex is $\0H^1\xrightarrow{\curl}\Hodiv\xrightarrow{\div}L^2$.
Another possible choice is to take $\Sigma_h$ to be Lagrange elements of
degree $r>1$ and $\VV_h$ to be Brezzi--Douglas--Marini elements of degree
$r-1$ (i.e., $\Sigma_h\x \VV_h=\P_r\Lambda^0\x \P_{r-1}\Lambda^1$).  This case
is similar, and will not be discussed further here.

We turn now to the main consideration of the current paper, which is the
equation \eqref{vec-Lap} with Dirichlet boundary conditions $\uu=0$ on
$\partial\Omega$.  This problem may of course be treated in the weak
formulation \eqref{weak-form} with $H=\0H^1(\Omega;\R^2)$.  In this case we
may integrate by parts and rewrite the bilinear form in terms of the gradient
(which, when applied to a vector, is matrix-valued):
$$
(\rot\uu ,\rot \vv) + (\div\uu,\div\vv) 
= (\grad\uu,\grad\vv), \quad \uu,\vv\in\0H^1(\Omega;\R^2).
$$
Thus the weak formulation \eqref{weak-form} is just
\begin{equation}
\label{gwf}
(\grad\uu,\grad\vv)=(\ff,\vv), \quad \vv\in\0H^1(\Omega;\R^2) ,
\end{equation}
for which the discretization using Lagrange or similar finite elements is
completely standard.

However, one might consider using a mixed method analogous to
\eqref{mm1}--\eqref{mm2} for the Dirichlet boundary value problem in the hope
of getting a better approximation of $\sigma=\rot\uu$, or
when Dirichlet boundary conditions are imposed on part of the
boundary and electric and/or magnetic boundary conditions are imposed on
another part of the boundary.  In fact, as we discuss in
Sections~\ref{sec:biharmonic} and \ref{sec:stokes}, a mixed approach to the
vector Laplacian with Dirichlet boundary conditions is implicitly used in
certain approaches to the solution of the Stokes equations which introduce the
vorticity, and in certain mixed methods for the biharmonic equation.  In the
mixed formulation of the Dirichlet problem for the vector Laplacian, the
vanishing of the normal component is an essential boundary condition, while
the vanishing of the tangential component arises as a natural boundary
condition.  No boundary conditions are imposed on the variable $\sigma$.
Thus, we define $\0\VV_h=\VV_h\cap \Hodiv$, and seek $\sigma_h \in \Sigma_h$,
$\uu_h \in \0\VV_h$ satisfying
\begin{gather}\label{md1}
(\sigma_h, \tau) - (\uu_h, \curl \tau) =0, \quad \tau \in \Sigma_h,
\\\label{md2}
(\curl \sigma_h, \vv) + (\div \uu_h, \div \vv) = (\ff,\vv), \quad
\vv \in \0\VV_h.
\end{gather}
Note that $\curl \Sigma_h\nsubseteq \0\VV_h$, so there is no Hilbert complex
available in this case, and the theory of \cite{bulletin} does not apply.
This suggests that there may be difficulties with stability and
convergence of the mixed method \eqref{md1}--\eqref{md2}.  In the next section,
we exhibit computational examples demonstrating that this pessimism is well
founded. The convergence of the mixed method for the Dirichlet boundary value
problem is severely suboptimal (while it is optimal for electric and magnetic
boundary conditions).  Thus, the difficulties arising from the loss of the
Hilbert complex structure are real, not an artifact of the theory.

However, the computations indicate that even for Dirichlet boundary
conditions, the mixed method does converge, albeit in a suboptimal manner.
While we do not recommend the mixed formulation for the Dirichlet problem,
in Section~\ref{sec:analysis} we prove convergence at the suboptimal rates that are observed
and, in so doing, clarify the sources of the suboptimality.
Theorem~\ref{thm:main}
summarizes the main results of our analysis, and the remainder 
of the section develops the tools needed to establish them.

This analysis of the
mixed finite element approximation of the vector Laplacian has implications
for the analysis of mixed methods for other important problems:
for the biharmonic equation using the
Ciarlet--Raviart mixed formulation, and for the Stokes
equations using a mixed formulation involving the vorticity, velocity, and
pressure, or, equivalently, using a stream function-vorticity formulation.
As a simple consequence of our analysis of the vector Laplacian, we
are able to analyze mixed methods for these problems, elucidating
the suboptimal rates of convergence observed for them, and establishing
convergence at the rates that do occur.  Some of the estimates we
obtain are already known, while others improve on existing
estimates. The biharmonic problem is addressed in
Section~\ref{sec:biharmonic} and the Stokes equations in Section~\ref{sec:stokes}.

We end this introduction with a summary of the main notations used in the
paper. For sufficiently smooth scalar-valued and vector-valued functions
$\sigma$ and $\uu$, respectively, we use the standard calculus operators
$$
\grad\sigma=(\pd{\sigma}x,\pd{\sigma}y), \quad
\curl\sigma=(\pd{\sigma}y,-\pd{\sigma}x), \quad
\div\uu = \pd{u_1}x+\pd{u_2}y,\quad
\rot\uu = \pd{u_2}x-\pd{u_1}y.
$$
We use the standard Lebesgue and Sobolev spaces $L^p(\Omega)$,
$H^l(\Omega)$, $W^l_p(\Omega)$, and also the spaces $H(\div,\Omega)$ and
$H(\rot,\Omega)$ consisting of $L^2$ vector fields $\uu$ with $\div\uu$ in
$L^2$ or $\rot\uu\in L^2$, respectively.  Since the domain $\Omega$ will
usually be clear from context, we will abbreviate these spaces as $L^p$,
$H^l$, $\Hdiv$, etc.  For vector-valued functions in a Lebesgue or Sobolev
space, we may use notations like $H^l(\Omega;\R^2)$, although when there is
little chance of confusion we will abbreviate this to simply $H^l$.  The
closure of $C^\infty_0(\Omega)$ in $H^1$, $\Hdiv$, and $\Hrot$, are denoted
$\0H^1$, $\Hodiv$, $\Horot$.  Note that if $\uu\in\Hdiv$, then the normal
trace $\uu\cdot\nn\in H^{-1/2}(\partial\Omega)$ and $\Hodiv =
\{\,\uu\in\Hdiv\,|\, \uu\cdot\nn=0 \text{ on $\partial\Omega$}\,\}$.
Similarly, $\Horot = \{\,\uu\in\Hrot\,|\, \uu\cdot\ssb=0 \text{ on
$\partial\Omega$}\,\}$.  We write $(\cdot,\cdot)$ for the $L^2(\Omega)$ inner
product (of either scalar- or vector-valued functions) and $\|\cdot\|$ for the
corresponding norm.

We shall also need the dual space of $\Hodiv$, the space $\Hodiv'$, 
normed by
\begin{equation}\label{dn}
\|\vv\|_{\Hodiv'}:= \sup_{\ww\in \Hodiv}\frac{(\vv,\ww)}
{\|\ww\|_{H(\div)}}.
\end{equation}  
Clearly,
\begin{equation}
  \label{eq:incl}
  L^2(\Omega;\R^2)\subset \Hodiv'\subset H^{-1}(\Omega;\R^2)
\end{equation}
with continuous inclusions.  

\section{Some numerical results}\label{sec:numerics}

We begin by considering the solution of the Hodge Laplacian \eqref{vec-Lap}
with electric boundary conditions \eqref{ebc} using the mixed method
\eqref{mm1}, \eqref{mm2}.  For the space $\Sigma_h$, we use Lagrange finite
elements of degree $r\ge1$ and for the space $V_h$, Raviart--Thomas elements
of degree $r$ (consisting locally of certain polynomials of degree $\le r$,
including all those of degree $\le r-1$).  These are stable elements and a
complete analysis is given in \cite{bulletin}.  Assuming that the solution is
smooth, it follows from \cite[Theorem~3.11]{bulletin} that the following rates
of convergence, each optimal, hold:
$$
\|u-u_h\| = O(h^r), \quad
\|\div(u-u_h)\| = O(h^r), \quad
\|\sigma-\sigma_h\| = O(h^{r+1}), \quad
\|\grad(\sigma-\sigma_h)\| = O(h^r).
$$
Table~\ref{tb:t1} shows the results of a computation with $r=2$.  Note that
the computed rates of convergence are precisely as expected.  In the test
problem displayed, the domain is $\Omega=(0,1)\x(0,1)$ and the exact solution
is $\uu = (\cos\pi x\sin\pi y,2\sin\pi x\cos\pi y)$.  The meshes used for
computation were obtained by dividing the square into $2^n\x2^n$ subsquares,
$n=1,2,4,\ldots 128$, and dividing each subsquare into two triangles with the
positively sloped diagonal.  Only the result for the four finest meshes are
shown.  Very similar results were obtained for the case of magnetic boundary
conditions, and for a sequence of nonuniform meshes, and also for other values
of $r\ge 1$.

\begin{table}[!ht]
\footnotesize
\caption{$L^2$ errors and convergence rates for degree $2$ mixed finite element
approximation of the vector Laplacian with electric boundary conditions.}
\label{tb:t1}
\begin{center}
\begin{tabular}{>{\centering}p{.85in}c|>{\centering}p{.85in}c|>{\centering}p{.85in}c|>{\centering}p{.85in}c}
  $\|\uu-\uu_h\|$    &     rate  &    $\|\div(\uu-\uu_h)\|$  &   rate  &    
$\|\sigma-\sigma_h\|$  &   rate  &   $\|\curl(\sigma-\sigma_h)\|$  &  rate \\[.05in]
\hline\vrule height.2in width0in
 2.14e-03 & 1.99 & 1.17e-02 & 1.99 & 2.16e-04 & 3.03 & 2.63e-02 & 1.98\\
 5.37e-04 & 1.99 & 2.93e-03 & 2.00 & 2.70e-05 & 3.00 & 6.60e-03 & 1.99\\
 1.34e-04 & 2.00 & 7.33e-04 & 2.00 & 3.37e-06 & 3.00 & 1.65e-03 & 2.00\\
 3.36e-05 & 2.00 & 1.83e-04 & 2.00 & 4.16e-07 & 3.02 & 4.14e-04 & 2.00\\
\end{tabular}
\end{center}
\end{table}

The situation in the case of Dirichlet boundary conditions is very
different. In Table~\ref{tb:t2} we consider the problem with exact solution
$\uu=(\sin\pi x\sin\pi y,\sin\pi x\sin\pi y)$.  The finite element spaces are
as for the computation of Table~\ref{tb:t1}, except that the boundary condition
of vanishing normal trace is imposed in the Raviart--Thomas space $V_h$.  Note
that the $L^2$ rate of convergence for $\sigma$ is not the optimal value of
$3$, but rather roughly $3/2$.  The $L^2$ rate of convergence of $\curl\sigma$
(i.e., the $H^1$ rate of convergence of $\sigma$) is also suboptimal by
roughly $3/2$: it converges only as $h^{1/2}$.  For $\uu$, the $L^2$
convergence rate is the optimal $2$, but the convergence rate for $\div\uu$ is
suboptimal by $1/2$.

\begin{table}[!ht]
\footnotesize
\caption{$L^2$ errors and convergence rates for degree $2$ mixed finite element
approximation of the vector Laplacian with Dirichlet boundary conditions.}
\label{tb:t2}
\begin{center}
\begin{tabular}{>{\centering}p{.85in}c|>{\centering}p{.85in}c|>{\centering}p{.85in}c|>{\centering}p{.85in}c}
  $\|\uu-\uu_h\|$    &     rate  &    $\|\div(\uu-\uu_h)\|$  &   rate  &    
$\|\sigma-\sigma_h\|$  &   rate  &   $\|\curl(\sigma-\sigma_h)\|$  &  rate \\[.05in]
\hline\vrule height.2in width0in
1.22e-03 & 2.01 & 1.55e-02 & 1.58 & 1.90e-02 & 1.62 & 2.53e+00 & 0.63 \\
3.05e-04 & 2.00 & 5.33e-03 & 1.54 & 6.36e-03 & 1.58 & 1.68e+00 & 0.60 \\
7.63e-05 & 2.00 & 1.85e-03 & 1.52 & 2.18e-03 & 1.54 & 1.14e+00 & 0.56 \\
1.91e-05 & 2.00 & 6.49e-04 & 1.51 & 7.58e-04 & 1.52 & 7.89e-01 & 0.53 \\
\end{tabular}
\end{center}
\end{table}

We have carried out similar computations for $r=3$ and $4$ and for nonuniform
meshes and the results are all very similar: degradation of the rate of
convergence by $3/2$ for $\sigma$ and $\curl\sigma$, and by $1/2$ for $\div
u$.  However the case $r=1$ is different.  There we saw no degradation of
convergence rates for uniform meshes, but for nonuniform meshes $\sigma$
converged in $L^2$ with rate suboptimal by $1$ and $\curl\sigma$ did not
converge at all.

The moral of this story is that the mixed finite element method using the
standard elements is indeed strongly tied to the underlying Hilbert complex
structure which is not present for the vector Laplacian with Dirichlet
boundary conditions, and the method is not appropriate for this problem.
Nonetheless the experiments suggest that the method does converge, albeit at a
degraded rate.  In the next section, we develop the theory needed to prove that
this is indeed so, and also to indicate where the lack of Hilbert complex
structure leads to the suboptimality of the method.

\section{Error analysis}\label{sec:analysis}
Theorem~\ref{thm:main}, which is the primary result of this section,
establishes convergence of the mixed method for the Dirichlet problem at the
suboptimal rates observed in the previous section.  In it we suppose that
$\Omega$ is a convex polygon endowed with a shape-regular and quasi-uniform
family of triangulations
of mesh size $h$.
We continue to denote by $\Sigma_h\subset H^1$ and
$\VV_h\subset\Hdiv$ the Lagrange and Raviart--Thomas finite element spaces of
some fixed degree $r \ge 1$, respectively, with $\0\VV_h=\VV_h\cap \Hodiv$.
\begin{thm}\label{thm:main}
Let $\uu$ denote the solution of the vector Laplace equation \eqref{vec-Lap}
with Dirichlet boundary condition $\uu=0$, and let $\sigma=\rot\uu$.  There
exist unique $\sigma_h\in\Sigma_h$, $\uu_h\in \0\VV_h$ satisfying the mixed
method \eqref{md1}--\eqref{md2}.  If the polynomial degree $r\ge 2$, then
the following estimates hold for $2 \le l \le r$ (whenever the norms on the
right hand side are finite):
\begin{gather*}
 \|\uu-\uu_h\| \le C h^l\|\uu\|_{l},
\\
\|\div(\uu-\uu_h)\|+ \|\sigma-\sigma_h\| 
+ h \|\curl(\sigma-\sigma_h)\| \le
C h^{l-1/2}(|\ln h| \|\uu\|_{W^l_{\infty}} + \|\uu\|_{l+1/2}).
\end{gather*}
If $r=1$, the estimates are:
\begin{gather*}
 \|\uu-\uu_h\| \le Ch |\ln h|^2(|\ln h| \|\uu\|_{W^1_{\infty}} + \|\uu\|_2),
\\
\|\div(\uu-\uu_h)\|+ \|\sigma-\sigma_h\| 
+ h \|\curl(\sigma-\sigma_h)\| \le
C h^{1/2} (|\ln h| \|\uu\|_{W^1_{\infty}} + h^{1/2} \|\uu\|_2).
\end{gather*}
\end{thm}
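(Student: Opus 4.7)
The plan rests on the observation that the continuous solution $(\sigma,\uu)$, with $\sigma=\rot\uu$ and $\uu=0$ on $\partial\Omega$, satisfies \eqref{md1}--\eqref{md2} with $(\sigma_h,\uu_h)$ replaced by $(\sigma,\uu)$: for \eqref{md1}, integration by parts of $(\uu,\curl\tau)$ produces a boundary term proportional to $\uu\cdot\ssb$, which vanishes because $\uu=0$ on $\partial\Omega$; for \eqref{md2}, writing $\curl\sigma-\grad\div\uu=\ff$ and integrating $(\grad\div\uu,\vv)$ by parts yields a boundary term proportional to $\vv\cdot\nn$, which is killed by $\vv\cdot\nn=0$ for $\vv\in\0\VV_h$. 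Thus the scheme is conforming with full Galerkin orthogonality, and the numerically observed suboptimality must be traced to weak stability: $\curl\Sigma_h\not\subset\0\VV_h$, so the pair $(\Sigma_h,\0\VV_h)$ is not part of a discrete Hilbert subcomplex. Unique solvability is then an immediate energy argument: setting $\ff=0$ and $\tau=\sigma_h$, $\vv=\uu_h$ in \eqref{md1}--\eqref{md2} gives $\|\sigma_h\|^2+\|\div\uu_h\|^2=0$; \eqref{md1} forces $\uu_h\perp\curl\0\Sigma_h$ in $L^2$, where $\0\Sigma_h=\Sigma_h\cap\0H^1$, and the discrete exactness $\ker(\div|_{\0\VV_h})=\curl(\0\Sigma_h)$, valid on a simply connected polygon, lets one write the divergence-free $\uu_h\in\0\VV_h$ as $\curl\psi$ with $\psi\in\0\Sigma_h$, whence $\|\uu_h\|^2=(\uu_h,\curl\psi)=0$.

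For the error bounds I would introduce the Raviart--Thomas interpolant $\uu_I=\PiV\uu\in\0\VV_h$ (which preserves the vanishing normal trace and satisfies $\div\uu_I=P_h\div\uu$) and a Scott--Zhang-type quasi-interpolant $\sigma_I\in\Sigma_h$, then set $\delta_\sigma=\sigma_h-\sigma_I$, $\delta_\uu=\uu_h-\uu_I$, and derive from Galerkin orthogonality equations whose right-hand sides are driven by the approximation errors $\sigma-\sigma_I$ and $\uu-\uu_I$. The central term is $(\curl(\sigma-\sigma_I),\delta_\uu)$; integrating by parts,
\[
(\curl(\sigma-\sigma_I),\delta_\uu)=(\sigma-\sigma_I,\rot\delta_\uu)-\int_{\partial\Omega}(\sigma-\sigma_I)(\delta_\uu\cdot\ssb)\,ds.
\]
The boundary integral is the precise manifestation of the missing complex structure: although $\delta_\uu\cdot\nn=0$, the tangential trace $\delta_\uu\cdot\ssb$ is generally nonzero. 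Controlling it via the trace approximation $\|\sigma-\sigma_I\|_{L^2(\partial\Omega)}\lesssim h^{l-1/2}\|\sigma\|_l$ together with a discrete trace inequality for $\delta_\uu\cdot\ssb$ yields the $h^{l-1/2}$ rates claimed in the theorem. To obtain the sharper norm combination $|\ln h|\,\|\uu\|_{W^l_\infty}+\|\uu\|_{l+1/2}$ for $\|\sigma-\sigma_h\|$, I would replace the $L^2$--$L^2$ boundary pairing by an $L^\infty$--$L^1$ pairing, paying $|\ln h|$ via maximum-norm Lagrange interpolation estimates. This boundary integral, and its sharp estimation, is the main obstacle of the proof.

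The optimal $O(h^l)$ bound for $\|\uu-\uu_h\|$ in $L^2$ is then recovered by an Aubin--Nitsche duality argument: given $\boldsymbol g\in L^2(\Omega;\R^2)$, solve the dual Dirichlet vector Laplacian with right-hand side $\boldsymbol g$, obtaining $\ww\in H^2\cap\0H^1$ and $\phi=\rot\ww\in H^1$ by convex-polygon regularity; expressing $(\uu-\uu_h,\boldsymbol g)$ in terms of the dual solution, inserting suitable interpolants of $(\phi,\ww)$, and using Galerkin orthogonality reduces the pairing to interpolation-error products of total order $h^l$, modulo higher-order contributions from the boundary defect already analyzed. For $r=1$ the $H^2$ regularity of the dual is inadequate (first-order interpolation of $\ww$ only), so I would supplement with maximum-norm and Green's-function estimates for the dual problem, producing the $|\ln h|^2$ factor present in the low-order case.
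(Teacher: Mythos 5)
Your overall architecture---consistency of the exact solution with \eqref{md1}--\eqref{md2}, an energy argument for uniqueness, interpolants plus Galerkin orthogonality for the energy-norm rates, and an Aubin--Nitsche duality step to restore $O(h^l)$ for $\|\uu-\uu_h\|$---matches the paper's, and your uniqueness argument is correct. But the mechanism you propose for producing the $h^{l-1/2}$ rates has two genuine gaps. First, the integration by parts behind your ``central term'' is not available: $\delta_\uu=\uu_h-\PiV\uu$ is a Raviart--Thomas field, which lies in $\Hdiv$ but not in $\Hrot$ (its tangential component jumps across interior edges), so $\rot\delta_\uu$ is not an $L^2$ function and an element-by-element integration by parts produces, besides the integral over $\partial\Omega$, jump contributions $\int_e(\sigma-\sigma_I)[\delta_\uu\cdot\ssb]\,ds$ on every interior edge; these do not vanish and are not obviously smaller than the quantity being estimated. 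Second, even for the boundary integral alone, pairing a trace of $\sigma-\sigma_I$ against a trace of $\delta_\uu$ is too lossy: the discrete trace inequality costs $\|\delta_\uu\cdot\ssb\|_{L^2(\partial\Omega)}\le Ch^{-1/2}\|\delta_\uu\|$, and combined with $\|\sigma-\sigma_I\|_{L^2(\partial\Omega)}\le Ch^{s-1/2}\|\sigma\|_s$ this yields $h^{s-1}\|\uu\|_{s+1}\|\delta_\uu\|$, which is either half an order worse in $h$ than claimed or requires a full extra derivative of $\uu$; likewise your $L^\infty$--$L^1$ variant would need $\|\uu\|_{W^{l+1}_\infty}$ rather than the $\|\uu\|_{W^l_\infty}$ in the statement. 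The paper never takes a trace of the error: its key estimate (Theorem~\ref{thm:proj1}) bounds $(\uu-\PV\uu,\curl\tau)$ by $Ch^{-1/2-1/p}\|\uu-\PV\uu\|_{L^p}\|\tau\|$ by cutting off the \emph{discrete test function} $\tau$ in the boundary strip and using an inverse inequality plus the $O(h)$ measure of that strip; the singular factor then multiplies a volume $L^p$ interpolation error, which is why only $\|\uu\|_{W^l_p}$ with $p=|\ln h|$ appears.

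Two further omissions. (i) The bilinear form is not coercive on $\Sigma_h\times\0\VV_h$, so Galerkin orthogonality with interpolation-error right-hand sides does not close by itself: testing the error equations with $(\delta_\sigma,\delta_\uu)$ controls $\|\delta_\sigma\|$ and $\|\div\delta_\uu\|$ but not $\|\delta_\uu\|$. The paper proves a discrete inf-sup condition (Theorem~\ref{well-pd}) in the mesh-dependent norm $\|\tau\|_{\Sigma_h}^2=\|\tau\|^2+\|\curl\tau\|_{\0\VV_h'}^2$, modeled on a continuous well-posedness result over $\Sigma\times\Hodiv$ with $\Sigma$ strictly larger than $H^1$; this is where the quantitative loss actually enters, and your sketch has no substitute for it. (ii) Your choice of interpolants forfeits the orthogonality the paper exploits: the elliptic projection $\PSig$ gives $(\curl[\sigma-\PSig\sigma],\vv)\le Ch\|\curl(\sigma-\PSig\sigma)\|\,\|\div\vv\|$ via the discrete Hodge decomposition, and the projection $\PV$ defined by \eqref{orthog1}--\eqref{orthog2} (not the canonical interpolant) kills $(\div[\uu-\PV\uu],\div\vv)$ exactly and confines $(\uu-\PV\uu,\curl\tau)$ to the boundary strip. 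With a Scott--Zhang interpolant and $\PiV$ those cancellations disappear, and several additional leading-order terms would have to be estimated separately.
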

Note that the error estimate for $\uu$ is optimal order (modulo the logarithm
when $r=1$), while (again modulo the logarithm), the estimate for $\div u$ is
suboptimal by $1/2$ order, and the estimates for $\sigma$ and $\curl\sigma$
are suboptimal by $3/2$ order.  This is as observed in the experiments
reported above. Above and throughout, we use $C$ to denote a generic constant
independent of $h$, whose values may differ at different occurrences.

The proof of this theorem is rather involved.  Without the Hilbert complex
structure, the numerical method is not only less accurate, but also harder to
analyze.  The analysis will proceed in several steps.  First, in
Section~\ref{wp}, we will establish the well-posedness of the continuous
problem, not in the space $H^1\x \Hodiv$, but rather using a larger space than
$H^1$ with weaker norm for $\sigma$.  Next, in Section~\ref{wp-d}, we mimic
the well-posedness proof on the discrete level to obtain stability of the
discrete problem, but with a mesh-dependent norm on $\Sigma_h$.  This norm is
even weaker than the norm used for the continuous problem, which may be seen
as the cause of the loss of accuracy.  To continue the analysis, we then
introduce projection operators into $\0V_h$ and $\Sigma_h$ and develop bounds
and error estimates for them in Section~\ref{projections}.  In
Section~\ref{basic} we combine these with the stability result to obtain basic
error estimates for the scheme, and we improve the error estimate for $\uu_h$
in Section~\ref{improved} using duality.

\subsection{Preliminaries}    \label{prelim}
First we recall two forms of the Poincar\'e--Friedrichs inequality:
\begin{equation}\label{pf}
\|\tau\|\le C_P\|\curl\tau\|, \quad \tau\in\0H^1, \qquad
\|\psi\|\le C_P\|\grad\psi\|,\quad \psi\in \hat H^1.
\end{equation}
Here $\hat H^1$ denotes the subspace of functions in $H^1$ with
zero mean. Similarly, we will use $\hat L^2$ to denote the zero mean
subspace of $L^2$.

Next we recall the Hodge decomposition.  The space $L^2(\Omega;\R^2)$ admits a
decomposition into the orthogonal closed subspaces $\curl H^1$ and $\grad
\0H^1$, or, alternatively, into the subspaces $\curl\0H^1$ and $\grad H^1$.
The decomposition of a given $\vv\in L^2$ according to either of these may be
computed by solving appropriate boundary value problems.  For example, we may
compute the unique $\rho\in\0H^1$ and $\phi\in\hat H^1$ such that
\begin{equation}
  \label{eq:1}
\vv = \curl\rho + \grad \phi,  
\end{equation}
by a Dirichlet problem and a Neumann problem for the scalar Poisson 
equation, respectively:
\begin{equation}\label{bvps}
\begin{gathered}
(\curl\rho,\curl\tau)=(\vv,\curl\tau),\quad \tau\in\0H^1,\\
(\grad\phi,\grad\psi)=(\vv,\grad\psi),\quad \psi\in\hat H^1.
\end{gathered}
\end{equation}
Clearly, $\|\grad\phi\|\le \|\vv\|$.  If $\vv\in\Hodiv$, then
$\phi$ satisfies the Neumann problem
$$
\Delta\phi=\div\vv \text{ in $\Omega$}, \quad
\frac{\partial\phi}{\partial n}=0 \text{ on $\partial\Omega$},\quad
\int_\Omega\phi\,dx=0,
$$
so, by elliptic regularity, $\|\phi\|_2\le C\|\div\vv\|$ if the domain
is convex, and $\|\phi\|_1\le C\|\div\vv\|$ for any domain.

We shall need analogous results on the discrete level.  To this end,
let $S_h$ denote the space of piecewise polynomials of degree at most $r-1$,
with no imposed interelement continuity.  Then the divergence operator maps
$\VV_h$ onto $S_h$ and also maps $\0\VV_h$ onto $\hat S_h$, the codimension
one subspace consisting of functions with mean value zero.  The former
pair of spaces is used to solve the Dirichlet problem for the Poisson
equation, and the later is used to solve the Neumann problem.
Each pair forms part of a short exact sequence:
\begin{equation}\label{seqs}
0\to \hat\Sigma_h \xrightarrow{\curl} \VV_h \xrightarrow{\div} S_h \to 0
\text{\quad and \quad}
0\to \0\Sigma_h \xrightarrow{\curl} \0\VV_h \xrightarrow{\div} \hat S_h \to 0,
\end{equation}
respectively.

The usual Raviart--Thomas approximate solution to Poisson equation $\Delta
\phi=g$ with Dirichlet boundary condition $\phi=0$ is then: find $\vv_h\in
\VV_h$, $\phi_h\in S_h$ such that
\begin{gather*}
 (\vv_h,\ww)+(\div\ww,\phi_h)=0,\quad \ww\in\VV_h,\qquad
(\div\vv_h,\psi)=(g,\psi), \quad \psi\in S_h.
\end{gather*}
Define the operator $\gradh:S_h\to\VV_h$ by
$$
(\gradh\phi,\ww)=-(\phi,\div\ww), \quad \phi\in S_h,\ \ww\in\VV_h.
$$
From the stability of the mixed method, we obtain the discrete Poincar\'e
inequality
$\|\phi\|\le \bar C_P\|\gradh\phi\|$, $\phi\in S_h$, 
with $\bar C_P$ independent of $h$.
The solution $(\vv_h,\phi_h)\in\VV_h\x S_h$ of the mixed method may be 
characterized by
$$
(\gradh\phi_h,\gradh\psi)= - (g,\psi),\quad\psi\in S_h,
$$
and $\vv_h=\gradh\phi_h$.

Corresponding to the first sequence in \eqref{seqs}, we have
the discrete Hodge decomposition 
\begin{equation}
  \label{eq:Hodge}
   \VV_h = \curl \Sigma_h + \gradh S_h,
\end{equation}
and corresponding to the second, the alternate discrete Hodge
decomposition 
\begin{equation}
  \label{eq:HodgeA}
\0\VV_h =\curl\0\Sigma_h+\gradoh S_h,
\end{equation}
where  
$\gradoh:S_h\to\0\VV_h$ is defined by
$$
(\gradoh\phi,\ww)=-(\phi,\div\ww), \quad \phi\in S_h,\ \ww\in\0\VV_h.
$$
Both of the discrete Hodge decompositions can be characterized
by finite element computations.  For example, in analogy to \eqref{bvps},
for given $\vv\in \0\VV_h$ we may compute the unique
$\rho_h\in \0\Sigma_h$ and $\phi_h\in\hat S_h$ such that 
$\vv=\curl\rho_h+\gradoh\phi_h$
from the following finite element systems (one primal, one mixed):
\begin{gather*}
(\curl\rho_h,\curl\tau)=(\vv,\curl\tau),\quad \tau\in\0\Sigma_h,\\
(\gradoh\phi_h,\gradoh\psi)=(\vv,\gradoh\psi),\quad \psi\in\hat S_h.
\end{gather*}

\subsection{Well-posedness of the continuous formulation}
\label{wp}

As a first step towards analyzing the mixed method, we obtain well-posedness
of a mixed formulation of the continuous boundary value problem for the vector
Laplacian. To do so, we need to introduce a larger space than $H^1$ for the
scalar variable, namely
$$
\Sigma = \{\tau \in L^2: \curl \tau \in 
\Hodiv'\},
$$
with norm $\|\tau\|_{\Sigma}^2 =
\|\tau\|^2 + \|\curl \tau\|_{\Hodiv'}^2$ (see \eqref{dn}).
The space $\Sigma$ has appeared before
in studies of the vorticity-velocity-pressure and stream
function-vorticity formulations of the Stokes problem \cite{d-s-s-2},
and an equivalent space (at least for domains with $C^{1,1}$ boundary)
is used in \cite{b-g-m}.
The bilinear form for the mixed formulation is
\begin{equation*}
B(\rho, \ww; \tau, \vv) =
(\rho, \tau)
- \langle \curl \tau,\ww \rangle + \langle\curl \rho, \vv\rangle
+ (\div \ww, \div \vv),
\end{equation*}
where $\langle \cdot,\cdot\rangle$ denotes the pairing between
  $\Hodiv'$ and $\Hodiv$ (or more generally between a Hilbert space
and its dual.)
Often, we will tacitly use the fact that if
  $\tau$ is in $H^1$, then $ \langle \curl \tau,\ww \rangle = ( \curl
  \tau, \ww)$.  Clearly,
\begin{equation*}
|B(\rho, \ww; \tau, \vv)|
\le 2(\|\rho\|_{\Sigma}^2 + \|\ww\|_{\Hdiv}^2)^{1/2}
(\|\tau\|_{\Sigma}^2 + \|\vv\|_{\Hdiv}^2)^{1/2},\quad
\rho,\tau\in\Sigma,\ \ww,\vv\in\Hodiv,
\end{equation*}
so $B$ is bounded on $(\Sigma\times \Hodiv)
\times (\Sigma\times \Hodiv)$.
For $\tau\in\Sigma$, we define $\tau_0\in\0H^1$ by
$$
(\curl\tau_0,\curl\psi)=\<\curl\tau,\curl\psi\>, \quad \psi\in \0H^1,
$$
Taking $\psi=\tau_0$ shows that
\begin{equation}\label{ct0}
\|\curl\tau_0\|\le \|\curl\tau\|_{\Hodiv'}
\le \|\tau\|_\Sigma,\quad \tau\in\Sigma.
\end{equation}
It is also true that
\begin{equation}\label{ct}
 \|\tau\|_{\Sigma}\le C(\|\tau\|+\|\curl\tau_0\|),\quad \tau\in\Sigma.
\end{equation}
To see this, define $\phi\in\hat L^2$ by 
\begin{equation}\label{phidef}
(\phi,\div\vv) = \<\curl\tau,\vv\> -(\curl\tau_0,\vv), \quad \vv\in\Hodiv.
\end{equation}
This is well-defined, since $\div\Hodiv=\hat L^2$, and, if $\div\vv$ vanishes,
then $\vv=\curl\psi$ for some $\psi\in\0H^1$, so the right-hand side vanishes
as well.  Clearly,
$$                                                                   
\<\curl\tau,\vv\> = (\curl\tau_0,\vv)+(\phi,\div\vv)
\le (\|\curl\tau_0\|+\|\phi\|)\|\vv\|_{\Hdiv},\quad \vv\in\Hodiv.
$$
Choosing $\vv\in\0H^1$ in \eqref{phidef}
with $\div\vv=\phi$ and $\|\vv\|_1\le C\|\phi\|$,
we get
$\|\phi\|\le C\|\curl(\tau-\tau_0)\|_{-1}$.
This implies
 $\|\curl\tau\|_{\Hodiv'}\le C(\|\tau\|+\|\curl\tau_0\|)$, thus
establishing \eqref{ct}.  We conclude from~\eqref{ct0} and~\eqref{ct} that
 the norm $\tau\mapsto \|\tau\|+\|\curl\tau_0\|$ is an 
equivalent norm on~$\Sigma$.

Assuming that $\ff\in L^2$ (or even $\Hodiv'$), we now
give a mixed variational formulation of the continuous problem.  We seek $\sigma \in
\Sigma$, $\uu \in \Hodiv$, 
such that
\begin{align*}
(\sigma, \tau) - \langle \curl \tau,\uu \rangle
& =0, \quad \tau \in \Sigma,
\\
\langle \curl \sigma, \vv \rangle
+ (\div \uu, \div \vv)
& = (\ff,\vv), \quad
\vv \in \Hodiv.
\end{align*}
We note that, if $\uu\in\0 H^1(\Omega; \R^2)$ is the solution
of the standard variational formulation \eqref{gwf} and $\sigma=\rot\uu$,
then $\sigma$, $\uu$ solve this mixed variational formulation.  Indeed,
$\uu\in\0 H^1(\Omega; \R^2)\subset\Hodiv$, $\sigma\in L^2$, and, for $\vv\in\0 H^1(\Omega; \R^2)$
$$
\<\curl\sigma,\vv\> = (\sigma,\rot\vv)=(\rot\uu,\rot\vv)=(\ff,\vv)-(\div\uu,\div\vv).
$$
This implies that $\curl\sigma\in\Hodiv'$, so $\sigma\in\Sigma$, and, extending to $\vv\in\Hodiv$
by density, that the second equation above holds.  Finally
$$
(\sigma,\tau)=(\rot\uu,\tau)=\<\uu,\curl\tau\>
$$
for all $\tau\in L^2$, so the first equation holds.

In the next theorem, we establish well-posedness of the mixed variational problem by proving the
inf-sup condition for $B$, following the approach of \cite{acta}.  Note
that the theorem establishes well-posedness of the more general problem where the
zero on the right hand side of the first equation is replaced by the linear
functional $\langle g, \tau \rangle$, where $g \in \Sigma^{\prime}$, and we
allow $\ff \in \Hodiv^{\prime}$.
\begin{thm}
\label{well-p}
There exist constants $c >0$, $C < \infty$ such that, for any
$(\rho, \ww) \in \Sigma \times \Hodiv$, there exists
$(\tau, \vv) \in \Sigma \times \Hodiv$ with
\begin{align}
\label{coercivity}
B(\rho, \ww; \tau, \vv) &\ge c 
(\|\rho\|_{\Sigma}^2 + \|\ww\|_{\Hdiv}^2),
\\
\label{bound}
\|\tau\|_{\Sigma} + \|\vv\|_{\Hdiv} &\le 
C(\|\rho\|_{\Sigma} + \|\ww\|_{\Hdiv}).
\end{align}
Moreover, if $\ww\in\curl \0H^1$, then we may choose $\vv\in\curl\0H^1$.
\end{thm}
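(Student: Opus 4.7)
The strategy is constructive: given $(\rho,\ww)\in\Sigma\x\Hodiv$, I exhibit an explicit test pair $(\tau,\vv)$ so that $B(\rho,\ww;\tau,\vv)$ dominates $\|\rho\|^2+\|\curl\rho_0\|^2+\|\ww\|^2+\|\div\ww\|^2$; by the norm equivalence \eqref{ct0}--\eqref{ct} on $\Sigma$, this is enough. Setting $(\tau,\vv)=(\rho,\ww)$ alone extracts only $\|\rho\|^2+\|\div\ww\|^2$, so I must augment $\tau$ with a piece exciting $\|\curl\rho_0\|$ and $\vv$ with a piece exciting $\|\ww\|$.

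First, I Hodge-decompose $\ww=\curl\zeta+\grad\phi$ with $\zeta\in\0H^1$ and $\phi\in\hat H^1$, using \eqref{eq:1}--\eqref{bvps}. Because $\ww\in\Hodiv$, the component $\phi$ solves the Neumann problem with source $\div\ww$, giving $\|\grad\phi\|\le C\|\div\ww\|$; Hodge orthogonality also supplies $(\curl\zeta,\ww)=\|\curl\zeta\|^2$ and $\|\ww\|^2\le\|\curl\zeta\|^2+C\|\div\ww\|^2$.

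Next, I take $\tau=\rho-\delta\zeta$ and $\vv=\ww+\curl\rho_0$, with $\delta>0$ to be chosen. Since $\rho_0$ is defined by $\<\curl\rho,\curl\psi\>=(\curl\rho_0,\curl\psi)$ for all $\psi\in\0H^1$, setting $\psi=\rho_0$ collapses $\<\curl\rho,\curl\rho_0\>$ to $\|\curl\rho_0\|^2$ (the pairing is well-defined since $\curl\rho_0\in\Hodiv$). The two $\<\curl\rho,\ww\>$ contributions cancel and $\div\curl\rho_0=0$, so a brief expansion gives
\[
B(\rho,\ww;\tau,\vv)=\|\rho\|^2-\delta(\rho,\zeta)+\delta\|\curl\zeta\|^2+\|\curl\rho_0\|^2+\|\div\ww\|^2.
\]
Controlling the cross term through the Poincar\'e inequality \eqref{pf} ($\|\zeta\|\le C_P\|\curl\zeta\|$) and a weighted Young inequality, then choosing $\delta$ small enough (e.g.\ $\delta=1/C_P^2$), retains positive multiples of each of the four target quantities and yields \eqref{coercivity}.

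The boundedness \eqref{bound} is then immediate: $\|\curl\rho_0\|\le\|\rho\|_\Sigma$ by \eqref{ct0}, $\div\curl\rho_0=0$, and $\|\zeta\|_\Sigma\le C\|\curl\zeta\|\le C\|\ww\|$ by Poincar\'e and orthogonality. For the last assertion, observe that $\ww\in\curl\0H^1$ forces $\div\ww=0$, which in the Neumann problem forces $\phi\equiv 0$, so $\ww=\curl\zeta$ and hence $\vv=\curl(\zeta+\rho_0)\in\curl\0H^1$. The main delicate point is the cross-term estimate: $\delta$ must be small enough that $\delta\|\curl\zeta\|^2$ survives the absorption of $-\delta(\rho,\zeta)$, since the surviving $\|\curl\zeta\|^2$ is precisely what is needed to recover $\|\ww\|^2$ via the decomposition in Step~2.
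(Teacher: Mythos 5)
Your proposal is correct and follows essentially the same route as the paper's proof: the same Hodge decomposition $\ww=\curl\mu+\grad\phi$, the same test pair $\tau=\rho-\delta\mu$, $\vv=\ww+\curl\rho_0$, absorption of the cross term via the Poincar\'e inequality for small $\delta$, and recovery of the full $\Sigma\times\Hdiv$ norms through \eqref{ct0}, \eqref{ct}, and $\|\ww\|^2=\|\curl\mu\|^2+\|\grad\phi\|^2$. The only (harmless) difference is in the final claim, where the paper simply observes that $\vv=\ww+\curl\rho_0\in\curl\0H^1$ directly, without passing through $\div\ww=0$ and the vanishing of $\phi$.
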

\begin{proof}
Define $\rho_0 \in \0H^1$ by $(\curl \rho_0, \curl \psi) =
\<\curl \rho, \curl \psi\>$, $\psi \in \0H^1$.
Next, use the Hodge decomposition to write 
$\ww$ in the form
$\ww = \curl \mu + \grad \phi$, with $\mu \in \0 H^1$
and $\phi \in \hat H^1$, and recall that
\begin{equation}\label{t}
 \|\grad\phi\|\le C\|\div \ww\|.
\end{equation}
We then choose
\begin{equation*}
\tau = \rho - \delta \mu, \qquad
\vv = \ww + \curl \rho_0,
\end{equation*}
where $\delta$ is a constant to be chosen.
Hence, 
\begin{align*}
B(\rho, \ww; \tau, \vv) &=
\|\rho\|^2 - \delta (\rho, \mu) - \langle \curl \rho, \ww\rangle
+ \delta (\curl \mu, \ww)
\\
&\qquad + \langle \curl \rho, \ww \rangle
 +\langle \curl \rho, \curl \rho_0\rangle
+  \|\div \ww\|^2
\\
&= \|\rho\|^2 + \delta \|\curl \mu\|^2  - \delta (\rho, \mu)  
 + \|\curl \rho_0\|^2  + \|\div \ww\|^2.
\end{align*}
Recalling the constant $C_P$ in the Poincar\'e inequality \eqref{pf}
and choosing $\delta$ sufficiently small, we obtain
\begin{align*}
B(\rho, \ww; \tau, \vv)
& \ge \frac{1}{2} \|\rho\|^2 + (\delta- \delta^2 C_P^2/2)\|\curl \mu\|^2 
 + \|\curl \rho_0\|^2 + \|\div \ww\|^2
\\
& \ge c\big( \|\rho\|_{\Sigma}^2 + \|\ww\|_{\Hdiv}^2\big),
\end{align*}
where we have used the facts that
$\|\ww\|^2 = \|\curl \mu\|^2 + \|\grad \phi\|^2$, \eqref{t}, and \eqref{ct}
in the last step.  This establishes \eqref{coercivity}.

To establish \eqref{bound}, we observe that
\begin{equation*}
\|\vv\|_{\Hdiv} \le \|\ww\|_{\Hdiv} + \|\curl \rho_0\|
\le \|\ww\|_{\Hdiv} + \|\rho\|_{\Sigma}
\end{equation*}
by \eqref{ct0}, while
\begin{equation*}
\|\tau\|_{\Sigma} \le \|\rho\|_{\Sigma} + \delta \|\mu\|_{\Sigma} \le
\|\rho\|_{\Sigma} + \delta \|\mu\|_1
\le C(\|\rho\|_{\Sigma} + \|\ww\|),
\end{equation*}
since $\|\mu\|_1\le C \|\curl\mu\|\le C\|\ww\|$.

To establish the final claim, we observe that if 
$\ww \in \curl \0H^1$, then obviously
$\vv = \ww + \curl \rho_0 \in \curl \0H^1$.
\end{proof}

\begin{remark}
Had we posed the weak formulation using the space $H^1\times
\Hodiv$ instead of $\Sigma\times \Hodiv$, we would not have obtained a
well-posed problem.
\end{remark}

\subsection{Stability of the discrete formulation}
\label{wp-d}

In this section, we establish the stability of the mixed method
\eqref{md1}--\eqref{md2}, guided by the arguments used for the continuous
problem in the preceding subsection.  Analogous to the norm on
$\Sigma$, we begin by defining a norm on $\Sigma_h$ by
$\|\tau\|_{\Sigma_h}^2 =
\|\tau\|^2 + \|\curl\tau\|_{\0\VV_h'}^2$, $\tau\in \Sigma_h$, where
\begin{equation*}
\|\vv\|_{\0 \VV_h'}:= \sup_{\ww\in \0\VV_h}\frac{(\vv,\ww)}{\|\ww\|_{H(\div)}}.
\end{equation*}
The bilinear form is bounded on the finite element spaces in this norm:
\begin{equation*}
|B(\rho, \ww; \tau, \vv)|
\le 2(\|\rho\|_{\Sigma_h}^2 + \|\ww\|_{\Hdiv}^2)^{1/2}
(\|\tau\|_{\Sigma_h}^2 + \|\vv\|_{\Hdiv}^2)^{1/2}, \quad
\rho, \tau \in \Sigma_h, \ \ww, \vv \in \0{\VV}_h.
\end{equation*}
For $\tau\in\Sigma_h$, we define
$\tau_0 \in \0 \Sigma_h$ by
\begin{equation*}
(\curl \tau_0, \curl \psi) = (\curl \tau, \curl \psi), \quad \psi \in 
\0 \Sigma_h.
\end{equation*}
The discrete analogue of \eqref{ct0} again follows by choosing
$\psi=\curl\tau_0$:
\begin{equation*}
\|\curl\tau_0\|\le \|\curl\tau\|_{\0V_h'}\le \|\tau\|_{\Sigma_h}, 
\quad \tau\in\Sigma_h.
\end{equation*}
Next we establish discrete analogue of \eqref{ct}, that is,
\begin{equation}\label{cth}
 \|\tau\|_{\Sigma_h}\le C(\|\tau\|+\|\curl\tau_0\|),\quad \tau\in\Sigma_h.
\end{equation}
To see this, define $\phi\in\hat S_h$ by
$$
(\phi,\div\vv) = (\curl\tau,\vv) - (\curl\tau_0,\vv), \quad \vv\in\0\VV_h.
$$
This is well-defined, since $\div\0\VV_h=\hat S_h$, and, if $\div\vv$ vanishes,
then $\vv=\curl\psi$ for some $\psi\in\0\Sigma_h$, so the right-hand side 
vanishes as well.  It follows that
$\|\curl\tau\|_{\0\VV_h'} \le \|\curl\tau_0\|+\|\phi\|$.
To bound $\|\phi\|$,
as in the continuous case, we choose $\vv\in\0H^1$ with
$\div\vv=\phi$ and $\|\vv\|_1\le C\|\phi\|$.  In the discrete case, 
we also introduce
$\PiV\vv$, the canonical projection of $\vv$ into the Raviart--Thomas space
$\0\VV_h$ (see \eqref{eq:dofs}), so $\div\PiV\vv=P_{S_h}\div\vv=\phi$ 
and $\|\vv-\PiV\vv\|\le C h\|\vv\|_1$.
Then
\begin{align*}
 \|\phi\|^2 &= (\phi,\div\PiV\vv) =(\curl\tau,\PiV\vv)-(\curl\tau_0,\PiV\vv)
\\
&= (\curl\tau,\PiV\vv-\vv) + (\curl\tau,\vv) -(\curl\tau_0,\PiV\vv)
\\
&\le C h (\|\tau\|_1 +\|\curl\tau\|_{-1}+\|\curl\tau_0\|)\|\vv\|_1.
\end{align*}
Using the inverse inequality $\|\tau\|_1\le Ch^{-1}\|\tau\|$ and the fact
that $\|\vv\|_1\le \|\phi\|$, gives the bound
$\|\phi\|\le C(\|\tau\|+\|\curl\tau_0\|)$,
and implies \eqref{cth}.

With this choice of norm, stability of the finite element approximation scheme
is established by an argument precisely analogous to that used in the proof of
Theorem~\ref{well-p}, simply using the $\Sigma_h$ norm, the discrete gradient
operator $\gradoh$, the discrete Hodge decomposition
\eqref{eq:HodgeA}, the estimate~\eqref{cth}, and the discrete
Poincar\'e inequality, instead of their continuous counterparts.
\begin{thm}
\label{well-pd}
There exists constants $c >0$, $C < \infty$, independent of $h$, such
that, for any $(\rho, \ww) \in \Sigma_h \times \0{\VV}_h$, there
exists $(\tau, \vv) \in \Sigma_h \times \0{\VV}_h$ with
\begin{align*}
B(\rho, \ww; \tau, \vv) &\ge c 
(\|\rho\|_{\Sigma_h}^2 + \|\ww\|_{\Hdiv}^2),
\\
\|\tau\|_{\Sigma_h} + \|\vv\|_{\Hdiv} &\le 
C(\|\rho\|_{\Sigma_h} + \|\ww\|_{\Hdiv}).
\end{align*}
Moreover, if $\ww\in\curl \0\Sigma_h$, then we may choose 
$\vv\in\curl\0\Sigma_h$.
\end{thm}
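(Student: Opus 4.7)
The plan is to mirror step-by-step the proof of Theorem~\ref{well-p}, substituting each continuous ingredient by its discrete counterpart. Given $(\rho,\ww)\in\Sigma_h\times\0\VV_h$, I would define $\rho_0\in\0\Sigma_h$ as in the paragraph preceding~\eqref{cth}, and apply the discrete Hodge decomposition~\eqref{eq:HodgeA} to write $\ww=\curl\mu+\gradoh\phi$ with $\mu\in\0\Sigma_h$ and $\phi\in\hat S_h$. Then I would take $\tau=\rho-\delta\mu$ and $\vv=\ww+\curl\rho_0$ for a small constant $\delta>0$ to be chosen.

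Expanding $B(\rho,\ww;\tau,\vv)$ leans on three identities: the two occurrences of $(\curl\rho,\ww)$ cancel by construction; testing the defining relation for $\rho_0$ with $\psi=\rho_0$ yields $(\curl\rho,\curl\rho_0)=\|\curl\rho_0\|^2$; and $(\curl\mu,\gradoh\phi)=-(\phi,\div\curl\mu)=0$ from the definition of $\gradoh$, where I use that $\curl\mu\in\0\VV_h$ because $\mu\in\0\Sigma_h$. Combined with $\div\curl\rho_0=0$, this reduces the bilinear form to
\[
B(\rho,\ww;\tau,\vv)=\|\rho\|^2+\delta\|\curl\mu\|^2-\delta(\rho,\mu)+\|\curl\rho_0\|^2+\|\div\ww\|^2.
\]
I would then absorb the cross term using the Poincar\'e inequality~\eqref{pf} applied to $\mu\in\0\Sigma_h\subset\0H^1$, taking $\delta$ small. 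To convert the right-hand side into the target norm, I use~\eqref{cth} to dominate $\|\rho\|_{\Sigma_h}^2$ by $\|\rho\|^2+\|\curl\rho_0\|^2$, and the discrete Poincar\'e inequality $\|\phi\|\le\bar C_P\|\gradoh\phi\|$ on $\hat S_h$ together with the orthogonality of $\curl\mu$ and $\gradoh\phi$ to obtain $\|\ww\|_{\Hdiv}^2\le\|\curl\mu\|^2+(1+\bar C_P^2)\|\div\ww\|^2$. Coercivity follows.

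The boundedness estimate is essentially immediate. Since $\div\vv=\div\ww$, one has $\|\vv\|_{\Hdiv}\le\|\ww\|_{\Hdiv}+\|\curl\rho_0\|\le\|\ww\|_{\Hdiv}+\|\rho\|_{\Sigma_h}$ by the discrete analogue of~\eqref{ct0}. For $\tau$, uniqueness in the definition of $\rho_0$ forces $\mu_0=\mu$ whenever $\mu\in\0\Sigma_h$, so~\eqref{cth} combined with Poincar\'e yields $\|\mu\|_{\Sigma_h}\le C(\|\mu\|+\|\curl\mu\|)\le C\|\curl\mu\|\le C\|\ww\|$, hence $\|\tau\|_{\Sigma_h}\le\|\rho\|_{\Sigma_h}+C\|\ww\|_{\Hdiv}$. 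The last claim is automatic: when $\ww\in\curl\0\Sigma_h$ one has $\div\ww=0$, the component $\gradoh\phi$ in the discrete Hodge decomposition vanishes, and $\vv=\ww+\curl\rho_0\in\curl\0\Sigma_h$.

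There is no conceptual obstacle beyond the continuous case: the delicate step, the norm equivalence~\eqref{cth}, has already been handled in the preamble. What remains is to verify that every tool used in the proof of Theorem~\ref{well-p}---the Hodge decomposition, the two Poincar\'e inequalities, the projection $\rho_0$, and the identity $\div\curl=0$---has a discrete analogue with constants independent of $h$, which is precisely what the preliminary subsection has arranged.
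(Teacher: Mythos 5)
Your proposal is correct and is precisely the argument the paper intends: the paper itself only remarks that Theorem~\ref{well-pd} follows by repeating the proof of Theorem~\ref{well-p} with the discrete Hodge decomposition \eqref{eq:HodgeA}, the operator $\gradoh$, the estimate \eqref{cth}, and the discrete Poincar\'e inequality in place of their continuous counterparts, and your write-up carries out exactly that substitution. The details you supply (the cancellation $(\curl\mu,\gradoh\phi)=0$, the bound $\|\gradoh\phi\|\le \bar C_P\|\div\ww\|$, the identification $\mu_0=\mu$ for $\mu\in\0\Sigma_h$, and the final claim for $\ww\in\curl\0\Sigma_h$) are all sound.
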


\begin{remark}
Note that $\|\tau\|_{\Sigma_h}\le \|\tau\|_\Sigma$ for $\tau\in\Sigma_h$,
but, in general equality does not hold.  Had we used the $\Sigma$ norm
instead of the $\Sigma_h$ norm on the discrete level, we would not have
been able to establish stability.
\end{remark}

\subsection{Projectors}
\label{projections}


Our error analysis will be based on the approximation and orthogonality
properties of certain projection operators into the finite element spaces:
$$
\PS:L^2\to S_h, \quad \PSig:H^1\to\Sigma_h, \quad 
\PSigo:\0H^1\to\0\Sigma_h,  \quad \PV:\Hodiv\to\0\VV_h.
$$
For $\PS$, we simply take the $L^2$ projection.  By standard approximation
theory,
$$
\|s-\PS s\|_{L^p}\le C h^l\|s\|_{W^l_p}, \quad 0\le l\le r,\ 1\le p\le\infty.
$$

For $\PSig$ and $\PSigo$, we use elliptic projections.  Namely, 
for any $\tau\in H^1$,
\begin{equation*}
(\curl {\PSig{\tau}}, \curl \rho) = (\curl \tau, \curl \rho), 
\quad \rho \in \Sigma_h, \qquad ({\PSig{\tau}}, 1) = (\tau,1),
\end{equation*}
and, for any $\tau\in\0H^1$
\begin{equation*}
(\curl {\PSigo{\tau}}, \curl \rho) = (\curl \tau, \curl \rho), 
\quad \rho \in \0\Sigma_h.
\end{equation*}
Then, by standard estimates,
\begin{equation}\label{eq:PS-err}
 \|\sigma- {\PSig{\sigma}}\| + h \|\sigma- {\PSig{\sigma}}\|_1\le
Ch^l\|\sigma\|_l, \quad 1\le l \le r+1.
\end{equation}
Moreover,
\begin{equation}\label{eq:curlterm}
    (\curl[\sigma-{\PSig{\sigma}}], \vv) 
    \le C h \|\curl (\sigma-{\PSig{\sigma}} )\|
    \|\div \vv\|, \quad \vv \in V_h, \;\sigma \in H^1.
\end{equation}
To prove this last estimate, we use
the discrete Hodge decomposition~\eqref{eq:Hodge}
to write $\vv = \curl \gamma_h + \grad_h
\psi_h$, with $\gamma_h \in \hat \Sigma_h$ and $\psi_h \in S_h$. As
explained in \S~\ref{prelim}, the pair $(\grad_h\psi_h, \psi_h)\in
V_h\x S_h$ is the mixed approximation of $(\grad\psi,\psi)$ where
$\psi\in \0H^1$ solves $\Lap \psi = \div \vv$ in
$\Omega$. Since $\Omega$ is convex, $\|\psi\|_2 \le C \|\div
\vv\|$.  Therefore,
\begin{align*}
(\curl[\sigma-{\PSig{\sigma}}], \vv)
&= (\curl[\sigma-{\PSig{\sigma}}], \curl \gamma_h + \grad_h \psi_h)
= (\curl[\sigma-{\PSig{\sigma}}], \grad_h \psi_h)
\\
&= (\curl[\sigma-{\PSig{\sigma}}], \grad_h \psi_h - \grad \psi)
\le C h\|\curl(\sigma-{\PSig{\sigma}})\| \|\psi\|_2
\\
& 
\le C h\|\curl(\sigma-{\PSig{\sigma}})\| \|\div \vv\|.
\end{align*}
For $\PSigo\tau$, $\tau\in\0H^1$, we will use the $W^1_p$ estimate
(due to Nitsche \cite{nitsche} for $r\ge 2$ and
Rannacher and Scott \cite{rannacher-scott} for $r=1$; cf.~also  
\cite[Theorem~8.5.3]{brenner-scott}):
\begin{equation}\label{w1p}
 \|\tau-\PSigo\tau\|_{W^1_p} \le C h^{l-1}\|\tau\|_{W^l_p}, \quad
1\le l \le r+1, \; 2\le p\le \infty,
\end{equation}
which holds with constant $C$ independent of $p$ as well as $h$.

We define the fourth projection operator, $\PV:\Hodiv\to\0\VV_h$,
by the equations
$$
(\PV\vv,\curl\tau+\gradoh s) = (\vv,\curl\tau)-(\div\vv,s), \quad
\tau\in \0\Sigma_h, s\in S_h.
$$
In view of the discrete Hodge decomposition \eqref{eq:HodgeA},
$\PV\vv\in\0\VV_h$ is well defined for any $\vv\in\Hodiv$.
It may be characterized as well by the equations
\begin{gather}
\label{orthog1}
(\vv-{\PV{\vv}}, \curl \tau) =0, \qquad \tau \in \0 \Sigma_h,
\\
\label{orthog2}
(\div[\vv-{\PV{\vv}}], s) =0, \qquad s \in S_h.
\end{gather}
Similar projectors have been used elsewhere, e.g.,
\cite[eq.~(2.6)]{CopelGopalOh10}. The properties of $\PV$ are summarized in
the following theorem.

\begin{thm}  \label{thm:proj}
For $\vv\in\Hodiv$ and $U\in \0H^1$,
\begin{equation}\label{eq:commute}
 \div\PV\vv = \PS\div\vv, \quad P_{\0V_h}\curl U = \curl P_{\0\Sigma_h} U.
\end{equation}
Moreover, the following estimates hold
 \begin{gather}
      \label{eq:PV-Lp}
      \|\vv-{\PV{\vv}}\|_{L^p}
      \le C p h^l \|\vv\|_{W^l_p}, \quad 1 \le l \le r,\ 2\le p < \infty,
         \\
      \label{eq:PV-div}
      \|\div(\vv-{\PV{\vv}})\| 
      \le C h^{l} \|\div \vv\|_{l}, \quad 0 \le l \le r,
  \end{gather}
whenever the norm on the right hand side is finite.
\end{thm}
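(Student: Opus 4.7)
The plan is to establish the commutativity identities \eqref{eq:commute} first, derive \eqref{eq:PV-div} as an immediate corollary, and then tackle the $L^p$ estimate \eqref{eq:PV-Lp} via comparison with the canonical Raviart--Thomas interpolant.

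For the first identity in \eqref{eq:commute}, observe that $\div \PV\vv \in S_h$, so testing \eqref{orthog2} against an arbitrary $s \in S_h$ yields $(\div \PV\vv, s) = (\div \vv, s) = (\PS \div \vv, s)$, and the equality follows. For the second, since $\curl \0\Sigma_h \subset \0\VV_h$, it suffices to verify that $\curl \PSigo U$ satisfies both orthogonalities characterizing $\PV(\curl U)$: property \eqref{orthog1} is precisely the defining relation of $\PSigo U$, while \eqref{orthog2} is automatic because $\div \curl = 0$. Uniqueness of the projection then gives $\curl \PSigo U = \PV \curl U$. Estimate~\eqref{eq:PV-div} follows at once, since $\div(\vv - \PV\vv) = \div\vv - \PS\div\vv$ and the standard $L^2$ approximation properties of $\PS$ provide the bound.

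For \eqref{eq:PV-Lp}, I would compare $\PV\vv$ with the canonical Raviart--Thomas interpolant $\PiV\vv \in \0\VV_h$, which satisfies $\div \PiV\vv = \PS \div \vv$ and the local estimate $\|\vv - \PiV\vv\|_{L^p} \le C h^l \|\vv\|_{W^l_p}$ with $C$ independent of $p$. Since $\div(\PV\vv - \PiV\vv) = 0$, the discrete Hodge decomposition \eqref{eq:HodgeA} furnishes a unique $\alpha_h \in \0\Sigma_h$ with $\PV\vv - \PiV\vv = \curl \alpha_h$, and by the triangle inequality it suffices to bound $\|\curl \alpha_h\|_{L^p}$. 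Testing \eqref{orthog1} shows $(\curl \alpha_h, \curl \tau) = (\vv - \PiV\vv, \curl \tau)$ for $\tau \in \0\Sigma_h$. Decomposing $\vv - \PiV\vv = \curl \rho + \grad \phi$ via the continuous Hodge decomposition with $\rho \in \0H^1$, $\phi \in \hat H^1$, and using the orthogonality of $\curl \tau$ with $\grad \hat H^1$, I identify $\alpha_h = \PSigo \rho$. The Rannacher--Scott $W^1_p$ estimate~\eqref{w1p} then yields
\begin{equation*}
\|\curl \alpha_h\|_{L^p} \le \|\curl \rho\|_{L^p} + \|\curl(\rho - \PSigo \rho)\|_{L^p} \le \|\curl \rho\|_{L^p} + C h^{l-1} \|\rho\|_{W^l_p}.
\end{equation*}

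The main obstacle will be to bound $\|\curl \rho\|_{L^p}$ and $\|\rho\|_{W^l_p}$ in terms of $\vv$ with the advertised $Cp$ prefactor. Since $\grad \phi$ solves a Neumann problem for the Poisson equation on the convex polygon $\Omega$ with data $\div \vv - \PS \div \vv$, I would control $\|\grad \phi\|_{L^p}$ and the higher-order $W^l_p$ norms of $\phi$ via $L^p$ elliptic regularity and then reconstruct estimates for $\rho = (\vv - \PiV\vv) - \grad \phi$. The explicit $p$-dependence should track the blow-up of the $L^p$ elliptic regularity constants on convex polygons as $p\to\infty$; careful bookkeeping of this dependence is what turns the triangle-inequality argument above into the sharp bound \eqref{eq:PV-Lp}.
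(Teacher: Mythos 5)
Your treatment of the two commutativity identities and of \eqref{eq:PV-div} is correct and is exactly the paper's argument, and your setup for \eqref{eq:PV-Lp} --- comparing with $\PiV\vv$, using $\div(\PV\vv-\PiV\vv)=0$ to write the difference as $\curl\alpha_h$ with $\alpha_h\in\0\Sigma_h$, and identifying $\alpha_h=\PSigo\rho$ via the continuous Hodge decomposition $\vv-\PiV\vv=\curl\rho+\grad\phi$ --- is also the paper's route (following Dur\'an). But the proof stops exactly where the real work begins: you never establish the bound $\|\curl\rho\|_{L^p}\le Cp\,\|\vv-\PiV\vv\|_{L^p}$, which is the only place the factor $p$ (and the convexity of $\Omega$) enters. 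The paper gets this by observing that $\rho\in\0H^1$ solves $-\Lap\rho=\rot(\vv-\PiV\vv)$ with homogeneous Dirichlet data, and invoking the $W^1_p$ a priori estimate of Fromm \cite{fromm} on convex domains, $\|\rho\|_{W^1_p}\le C_p\|\rot(\vv-\PiV\vv)\|_{W^{-1,p}}\le C_p\|\vv-\PiV\vv\|_{L^p}$, with $C_p\le Cp$ traced through the Marcinkiewicz interpolation constant \cite{dibenedetto}. Your substitute --- controlling $\grad\phi$ through ``higher-order $W^l_p$ norms of $\phi$'' for the Neumann problem and subtracting --- is not viable as stated: on a convex polygon, $W^2_p$ regularity for the Laplacian fails for $p$ large (depending on the largest interior angle), so no such higher-order estimates are available uniformly in $p$; and in any case only a first-order estimate with data measured in $W^{-1,p}$ is needed. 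Equivalently, what you are implicitly asserting is the $L^p$-boundedness of the Hodge projection with an explicit $O(p)$ constant, and that assertion needs a citation or a proof; it is the crux of the theorem.

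A second, smaller defect: in bounding $\|\curl\alpha_h\|_{L^p}$ you invoke \eqref{w1p} with general $l$ and thereby create a term $Ch^{l-1}\|\rho\|_{W^l_p}$. For $l\ge2$ this quantity is not finite in general: $\rho$ solves a Poisson equation whose right-hand side $\rot(\vv-\PiV\vv)$ lies only in $W^{-1,p}$, so $\rho$ has no more than $W^1_p$ regularity. You should use \eqref{w1p} only with $l=1$, i.e., as the $W^1_p$-stability statement $\|\curl\PSigo\rho\|_{L^p}\le C\|\curl\rho\|_{L^p}$; the rate $h^l$ in \eqref{eq:PV-Lp} comes entirely from $\|\vv-\PiV\vv\|_{L^p}\le Ch^l\|\vv\|_{W^l_p}$ in \eqref{eq:4}, not from any approximation of $\rho$. (Also, $\rho$ is a scalar: the identity you want is $\curl\rho=(\vv-\PiV\vv)-\grad\phi$, not $\rho=(\vv-\PiV\vv)-\grad\phi$.)
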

\begin{proof}
The first commutativity property in \eqref{eq:commute} is immediate from
\eqref{orthog2}, and the divergence estimate \eqref{eq:PV-div} follows
immediately.  For the second commutativity property, we note that
$\curl P_{\0\Sigma_h} U\in\0V_h$ and that, if we set $\vv=\curl U$
and replace $P_{\0V_h}\vv$ by $\curl P_{\0\Sigma_h} U$, then the defining
equations \eqref{orthog1}, \eqref{orthog2} are satisfied.

To prove the $L^p$ estimate \eqref{eq:PV-Lp}, we follow the proof of
corresponding results for mixed finite element approximation of second order
elliptic problems given in~\cite{duran}.  First, we introduce the canonical
interpolant $\PiV:H^1(\Omega;\R^2)\to V_h$ into the Raviart--Thomas space,
defined through the degrees of freedom
\begin{equation}\label{eq:dofs}
\vv\mapsto \int_e \vv\cdot \nn\,w\,ds, \quad w\in\P_{r-1}(e), \qquad
\vv\mapsto \int_T\vv\cdot \ww\,dx, \quad \ww\in \P_{r-2}(T),
\end{equation}
where $e$ ranges over the edges of the mesh and $T$ over the triangles.
Then
\begin{equation}\label{eq:4}
\|\vv - \PiV \vv\|_{L^p} \le C h^l \|\vv\|_{W^l_p}, 
\quad 1 \le l \le r, \ 1 \le p \le \infty,
\end{equation}
and, since  $\div \PiV \vv = \PS \div \vv$, 
\begin{equation}
\label{eq:5}
\|\div(\vv - \PiV \vv)\|_{L^p} \le C h^l \|\div \vv\|_{W^l_p}, 
\quad
0 \le l \le r, \ 1 \le p \le \infty.
\end{equation}
Writing $\vv - \PV{\vv} = (\vv - \PiV \vv) + (\PiV \vv -  \PV{\vv})$,
it thus remains to bound the second term.
From \eqref{eq:commute}, $\div({\PV{\vv}} - \PiV \vv) =0$,
 so
$\PV{\vv} - \PiV \vv = \curl \rho_h$ for some $\rho_h \in \0 \Sigma_h$.
Applying the decomposition~\eqref{eq:1}, we have $\vv - \PiV\vv =
\curl \rho + \grad \psi$ for some $\rho \in \0H^1$ and $\psi \in \hat
H^1$.  From \eqref{orthog1}, 
\begin{equation*}
(\curl \rho_h, \curl \tau) = (\curl \rho, \curl \tau), \quad
\tau \in \0 \Sigma_h.
\end{equation*}
Thus, $\rho_h=\PSigo\rho$ and so satisfies the
bound
$\|\curl \rho_h\|_{L^p} \le C \|\curl \rho\|_{L^p}$ given above in
\eqref{w1p}.

Since
\begin{equation*}
(\curl \rho, \curl \tau) = (\vv - \PiV\vv,\curl \tau)
= (\rot(\vv - \PiV\vv), \tau), \quad \tau \in \0 H^1,
\end{equation*}
$\rho \in \0 H^1$ satisfies $-\Lap \rho = \rot(\vv - \PiV\vv)$.
Using the elliptic regularity result of \cite[Corollary 1]{fromm}, we have
for $1 < p < \infty$ that
\begin{equation*}
\|\rho\|_{W^1_p}\le C_p \|\rot(\vv-\PiV\vv)\|_{W^{-1,p}}
\le C_p \|\vv-\PiV\vv\|_{L^p}.
\end{equation*}
Following the proof of that result, the dependence of the
constant $C_p$ on $p$ arises from the use of the Marcinkiewicz interpolation
theorem for interpolating between a weak $L^1$ and an $L^2$ estimate.
Using the explicit bound on the constant in this theorem found in
\cite[Theorem VIII.9.2]{dibenedetto}, it follows directly that
$C_p \le C p$, where $C$ is a constant independent of $p$.
We remark that this regularity result requires the assumed
convexity of $\Omega$, and does not hold for all $1 <
p < \infty$ if $\Omega$ is only Lipschitz (c.f.~\cite{jk}).  Estimate
\eqref{eq:PV-Lp} follows by combining these results and applying
\eqref{eq:4}.
%
\end{proof}

Theorem~\ref{thm:proj1} below gives one more property of $\PV$, inspired
by an idea in~\cite{scholz}.  To prove it we need a simple lemma.
\begin{lem}
 Let $\rho$ be a piecewise polynomial function with respect to
some triangulation which is nonzero only on triangles meeting $\partial\Omega$.
Then for any $1\le q\le 2$,
$$
\|\rho\|_{L^q}\le C h^{1/q-1/2}\|\rho\|_{L^2},
$$
where the constant $C$ depends only on the polynomial degree and the shape
regularity of the triangulation.
\end{lem}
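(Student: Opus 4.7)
The proof plan is short: the support of $\rho$ lies in the thin boundary strip, and the estimate is then pure Hölder.

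First I would let $\Omega_h$ denote the union of all mesh triangles that meet $\partial\Omega$. By hypothesis, $\rho$ vanishes outside $\Omega_h$, so $\|\rho\|_{L^q(\Omega)} = \|\rho\|_{L^q(\Omega_h)}$ and likewise for $L^2$. By shape regularity each such triangle has diameter comparable to $h$ and hence area bounded by $Ch^2$, while the number of such triangles is bounded by $C|\partial\Omega|/h$ (since each boundary edge is incident to one such triangle, and interior triangles that touch $\partial\Omega$ only at a vertex are uniformly bounded in number per vertex by shape regularity). This gives the strip measure estimate
\begin{equation*}
|\Omega_h| \le C h,
\end{equation*}
where $C$ depends only on $\Omega$ and the shape-regularity constant.

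Next, since $1\le q \le 2$, I would apply Hölder's inequality with conjugate exponents $2/q$ and $2/(2-q)$ (interpreting the latter as $\infty$ when $q=2$):
\begin{equation*}
\|\rho\|_{L^q(\Omega_h)}^q
= \int_{\Omega_h} |\rho|^q\cdot 1\,dx
\le \left(\int_{\Omega_h}|\rho|^2\,dx\right)^{q/2} |\Omega_h|^{(2-q)/2}
= \|\rho\|_{L^2}^q\, |\Omega_h|^{(2-q)/2}.
\end{equation*}
Taking $q$-th roots and inserting $|\Omega_h|\le Ch$ yields
\begin{equation*}
\|\rho\|_{L^q} \le C\, |\Omega_h|^{1/q - 1/2}\|\rho\|_{L^2} \le C h^{1/q - 1/2}\|\rho\|_{L^2},
\end{equation*}
which is exactly the claimed inequality.

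There is no real obstacle here: the piecewise polynomial and shape-regularity hypotheses enter only through the strip-measure bound $|\Omega_h|\le Ch$, and no inverse estimate or local polynomial structure is actually needed for the Hölder step. The dependence of $C$ on the polynomial degree is therefore vacuous in this argument, which is consistent with the statement of the lemma.
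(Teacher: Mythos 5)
Your proof is correct, and it takes a genuinely more elementary route than the paper's. The paper works element by element: it first applies a scaled inverse estimate $\|\rho\|_{L^q(T)}\le Ch^{2/q-1}\|\rho\|_{L^2(T)}$ on each triangle $T$ (using equivalence of norms on the finite-dimensional polynomial space), then sums over the boundary triangles and applies H\"older to the resulting finite sum together with the count $\#\Th^\partial\le Ch^{-1}$. You instead bypass the local polynomial structure entirely: you bound the measure of the boundary strip by $Ch$ and apply H\"older once, globally, to $\int_{\Omega_h}|\rho|^q\cdot 1$. The two arguments rest on the same geometric input (boundary triangles have area $O(h^2)$ and number $O(h^{-1})$, which uses quasi-uniformity as well as shape regularity --- a point on which the paper's own statement of the lemma is equally informal), and they yield the same power of $h$. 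What your version buys is the observation, which you state explicitly and correctly, that the inequality holds for any $L^2$ function supported in the boundary strip, so the dependence of $C$ on the polynomial degree in the lemma's statement is vacuous; what the paper's elementwise version buys is nothing extra here, though it is the form one would need if the support were not confined to a thin region of controlled measure.
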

\begin{proof}
By scaling and equivalence of norms on a finite dimensional space, we have
$$
\|\rho\|_{L^q(T)}\le C h^{2/q-1}\|\rho\|_{L^2(T)}, \quad \rho\in\P_r(T),
$$
where the constant $C$ depends only on the polynomial degree $r$ and the
shape constant for the triangle $T$.
Now, let $\Th^\partial$ denote the set of triangles meeting $\partial\Omega$.
Then
\begin{equation*}
\|\rho\|_{L^q(\Omega)}^q = \sum_{T\in\Th^\partial}\|\rho\|_{L^q(T)}^q
\le Ch^{2-q}\sum_{T\in\Th^\partial}\|\rho\|_{L^2(T)}^q.
\end{equation*}
Applying H\"older's inequality we have
$$
\sum_{T\in\Th^\partial}\|\rho\|_{L^2(T)}^q
\le 
(\#\Th^\partial)^{(2-q)/2}\bigl(\sum_{T\in\Th^\partial}
\|\rho\|_{L^2(T)}^2\bigr)^{q/2},
$$
and $\#\Th^\partial\le Ch^{-1}$ by the assumption of shape regularity.  
Combining these results gives the lemma.
\end{proof}

\begin{thm}\label{thm:proj1}
Let $2 \le p \le \infty$.  Then
  \begin{equation}
   (\vv - {\PV{\vv}}, \curl \tau)
       \le C h^{-1/2-1/p} \|\vv - {\PV{\vv}}\|_{L^p} \|\tau\|,
      \quad \tau \in \Sigma_h, \quad \vv\in \Hodiv \cap L^p.
\end{equation}
\end{thm}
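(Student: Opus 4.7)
The plan is to exploit the orthogonality relation \eqref{orthog1} to localize the error to a boundary strip, then apply an inverse estimate together with the preceding boundary-strip lemma. By \eqref{orthog1}, $(\vv-\PV\vv, \curl\tau_0)=0$ for every $\tau_0\in\0\Sigma_h$, so for any such $\tau_0$ we have
$$
(\vv-\PV\vv, \curl\tau) = (\vv-\PV\vv, \curl(\tau-\tau_0)).
$$
The natural choice is to take $\tau_0\in\0\Sigma_h$ to be the element whose Lagrange nodal values agree with those of $\tau$ at all interior nodes and vanish at all boundary nodes. Then $\eta:=\tau-\tau_0$ is a piecewise polynomial that is supported only on the set $\Th^{\partial}$ of triangles meeting $\partial\Omega$, which is exactly the situation covered by the preceding lemma.

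Next, I would apply H\"older's inequality with exponents $p$ and $q=p/(p-1)\in[1,2]$:
$$
|(\vv-\PV\vv, \curl\eta)| \le \|\vv-\PV\vv\|_{L^p}\,\|\curl\eta\|_{L^q},
$$
and then use a standard inverse inequality on the piecewise polynomial $\eta$ to obtain $\|\curl\eta\|_{L^q}\le C h^{-1}\|\eta\|_{L^q}$. Since $\eta$ is supported in $\Th^{\partial}$ and $1\le q\le 2$, the preceding lemma gives
$$
\|\eta\|_{L^q} \le C h^{1/q-1/2}\|\eta\|_{L^2}.
$$
Combining these three inequalities yields
$$
|(\vv-\PV\vv, \curl\tau)| \le C h^{-1+1/q-1/2}\|\vv-\PV\vv\|_{L^p}\|\eta\|_{L^2}
= C h^{-1/2-1/p}\|\vv-\PV\vv\|_{L^p}\|\eta\|_{L^2},
$$
using $1/q=1-1/p$.

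It only remains to bound $\|\eta\|_{L^2}$ by $\|\tau\|_{L^2}$. Working on a single triangle $T\in\Th^{\partial}$ and passing to the reference element, norm equivalence for the finite dimensional Lagrange space gives $\|\hat\eta\|_{L^2(\hat T)}^2\le C\sum_{i}|\hat\tau(\hat a_i)|^2$ where the sum runs over the boundary nodes of $T$; after the usual scaling this implies $\|\eta\|_{L^2(T)}\le C\|\tau\|_{L^2(T)}$. Summing over $T\in\Th^{\partial}$ yields $\|\eta\|_{L^2}\le C\|\tau\|$, completing the proof. The main obstacle, if any, is simply choosing the right localization $\tau_0$ so that $\eta$ lives in the boundary strip; once that is in place the ingredients (orthogonality, inverse estimate, the boundary-strip lemma, and local norm equivalence) combine cleanly to give the exponent $h^{-1/2-1/p}$.
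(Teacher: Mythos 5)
Your proposal is correct and follows essentially the same route as the paper: the same localization $\tau_0\in\0\Sigma_h$ (the paper's $\0\tau$), the orthogonality \eqref{orthog1}, H\"older with $q=p/(p-1)$, the boundary-strip lemma, an inverse estimate, and the local bound $\|\tau-\tau_0\|\le C\|\tau\|$. The only (immaterial) difference is the order of operations: the paper applies the lemma to $\curl(\tau-\0\tau)$ in $L^q$ and then an $L^2$ inverse inequality, while you apply the $L^q$ inverse inequality first and then the lemma to $\tau-\tau_0$; both give the exponent $h^{-1/2-1/p}$.
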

\begin{proof}
Define $\0 \tau \in \0 \Sigma_h$ by taking the Lagrange degrees of freedom to
be the same as those for $\tau$, except setting equal to zero those associated
to vertices or edges in $\partial\Omega$.  Then $\|\0\tau\|\le C\|\tau\|$ and
$\tau-\0\tau$ is nonzero only on triangles meeting $\partial\Omega$.  By
\eqref{orthog1},
\begin{equation*}
(\vv - {\PV{\vv}}, \curl \tau) = (\vv - {\PV{\vv}}, \curl [\tau - \0 \tau]).
\end{equation*}
Let $q=p/(p-1)$, so $1\le q\le 2$.  
Applying H\"older's inequality, the lemma, and an inverse inequality, we obtain
\begin{align*}
(\vv - {\PV{\vv}} , \curl (\tau - \0 \tau ))
& \le \|\vv - {\PV{\vv}}\|_{L^{p}} 
\|\curl(\tau - \0 \tau)\|_{L^q}
\\
& \le C \|\vv - {\PV{\vv}}\|_{L^p} 
h^{1/2 -1/p} 
\|\curl(\tau - \0 \tau)\|_{L^{2}}
\\
& \le C \|\vv - {\PV{\vv}}\|_{L^p} h^{-1/2-1/p} 
\|\tau - \0 \tau\|_{L^{2}},
\end{align*}
from which the result follows.
\end{proof}

\subsection{Error estimates by an energy argument}
\label{basic}

Using the projection operators defined in the last subsection and
the stability result of the preceding section, we
now obtain a basic error estimate (which is not, however, of optimal order).

\begin{thm}
 \label{energy-rates}
Let $r\ge1$ denote the polynomial degree.  There exists a constant 
$C$ independent of the mesh size $h$ and of $p\in[2,\infty)$, for which
\begin{multline*}
\|\sigma - \sigma_h\| + h \|\sigma-\sigma_h\|_1
+ \|\uu - \uu_h\|_{\Hdiv}
\\
\le 
C
\begin{cases}
h^{l-1/2-1/p} \left(p \|\uu\|_{W_{p}^l} +
       \|\uu\|_{l+1/2-1/p}\right), \ 2\le l\le r, & \text{if $r\ge 2$,}
\\
h^{1/2-1/p} \left( p \|\uu\|_{W_{p}^1} + h^{1/2+1/p} \|\uu\|_{2} \right),
 & \text{if $r= 1$},
\end{cases}
\end{multline*}
whenever the norms on the right hand side are finite.
\end{thm}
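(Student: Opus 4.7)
The plan is to apply the discrete inf-sup stability of Theorem~\ref{well-pd} to the pair of projection errors $e_\sigma := \PSig\sigma - \sigma_h \in \Sigma_h$ and $e_\uu := \PV\uu - \uu_h \in \0\VV_h$, and then pass from projection errors to full errors by the triangle inequality.  Since $\sigma\in H^1\subset\Sigma$, the continuous mixed formulation holds against all discrete test pairs, so subtracting \eqref{md1}--\eqref{md2} gives Galerkin orthogonality $B(\sigma-\sigma_h,\uu-\uu_h;\tau,\vv)=0$ for $(\tau,\vv)\in\Sigma_h\x\0\VV_h$, hence the error identity
$$
B(e_\sigma,e_\uu;\tau,\vv)=B(\PSig\sigma-\sigma,\,\PV\uu-\uu;\tau,\vv),\quad (\tau,\vv)\in\Sigma_h\x\0\VV_h.
$$
Theorem~\ref{well-pd} furnishes a test pair $(\tau,\vv)$ with $B(e_\sigma,e_\uu;\tau,\vv)\ge c(\|e_\sigma\|_{\Sigma_h}^2+\|e_\uu\|_{\Hdiv}^2)$ and $\|\tau\|_{\Sigma_h}+\|\vv\|_{\Hdiv}\le C(\|e_\sigma\|_{\Sigma_h}+\|e_\uu\|_{\Hdiv})$, reducing the problem to bounding the four pieces of $B(\PSig\sigma-\sigma,\PV\uu-\uu;\tau,\vv)$.

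The $(\div(\PV\uu-\uu),\div\vv)$ piece vanishes by the commuting relation $\div\PV\uu=\PS\div\uu$ from \eqref{eq:commute}. For the $(\PSig\sigma-\sigma,\tau)$ piece I would use Cauchy--Schwarz together with an interpolated version of \eqref{eq:PS-err}: interpolating the integer-order estimates $\|\sigma-\PSig\sigma\|\le Ch^s\|\sigma\|_s$ between $s=l-1$ and $s=l$ yields $\|\sigma-\PSig\sigma\|\le Ch^{l-1/2-1/p}\|\sigma\|_{l-1/2-1/p}\le Ch^{l-1/2-1/p}\|\uu\|_{l+1/2-1/p}$ via $\sigma=\rot\uu$. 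The $(\curl(\PSig\sigma-\sigma),\vv)$ piece is treated by \eqref{eq:curlterm}, whose extra factor of $h$ combines with the analogous interpolated $H^1$-estimate $\|\sigma-\PSig\sigma\|_1\le Ch^{l-3/2-1/p}\|\uu\|_{l+1/2-1/p}$ to again produce the rate $Ch^{l-1/2-1/p}\|\uu\|_{l+1/2-1/p}\|\div\vv\|$. The delicate piece is $(\uu-\PV\uu,\curl\tau)$, where Theorem~\ref{thm:proj1} gives
$$
|(\uu-\PV\uu,\curl\tau)|\le Ch^{-1/2-1/p}\|\uu-\PV\uu\|_{L^p}\|\tau\|,
$$
and combining with \eqref{eq:PV-Lp} produces $Cp\,h^{l-1/2-1/p}\|\uu\|_{W^l_p}\|\tau\|$, which accounts for the explicit factor of $p$ in the theorem.

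Assembling these four bounds, dividing by $\|e_\sigma\|_{\Sigma_h}+\|e_\uu\|_{\Hdiv}$, and applying the triangle inequality $\|\sigma-\sigma_h\|\le\|\sigma-\PSig\sigma\|+\|e_\sigma\|_{\Sigma_h}$ together with $\|\uu-\uu_h\|_{\Hdiv}\le\|\uu-\PV\uu\|_{\Hdiv}+\|e_\uu\|_{\Hdiv}$ (the projection-approximation terms being controlled by the same interpolated bounds and by \eqref{eq:PV-div}) yields the stated estimate on $\|\sigma-\sigma_h\|+\|\uu-\uu_h\|_{\Hdiv}$. For $h\|\sigma-\sigma_h\|_1$, I would split and use the quasi-uniform inverse inequality $\|\rho\|_1\le Ch^{-1}\|\rho\|$ on $\Sigma_h$ to absorb $h\|e_\sigma\|_1$ into $C\|e_\sigma\|$, while $h\|\sigma-\PSig\sigma\|_1$ is handled by the interpolated $H^1$-seminorm bound. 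The case $r=1$ follows the same blueprint but bypasses fractional interpolation altogether, using the integer-order estimates $\|\sigma-\PSig\sigma\|_j\le Ch^{1-j}\|\sigma\|_1\le Ch^{1-j}\|\uu\|_2$ for $j\in\{0,1\}$ directly.

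The main obstacle I anticipate is the careful bookkeeping of the fractional Sobolev interpolation so as to produce the sharp norm $\|\uu\|_{l+1/2-1/p}$ in the final bound rather than the cruder $\|\uu\|_{l+1}$ that would emerge from a direct integer-order use of \eqref{eq:PS-err}; the two bounds are not comparable, so the sharper version really does require interpolating the projector estimates for $\PSig$ (and, for the approximation portion, for $\PV$) in a way compatible with both the $L^2$ and $H^1$ seminorm outputs that feed into the expansion of $B$. Once that interpolation is in place, the argument is driven entirely by the three projector estimates \eqref{eq:PV-Lp}, \eqref{eq:PV-div}, \eqref{eq:curlterm}, by Theorem~\ref{thm:proj1}, and by the discrete inf-sup of Theorem~\ref{well-pd}.
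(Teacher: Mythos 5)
Your proposal is correct and follows essentially the same route as the paper: split into projection error plus discrete remainder, apply the discrete inf-sup of Theorem~\ref{well-pd} with Galerkin orthogonality, kill the divergence term via \eqref{eq:commute}, and bound the remaining three terms by Cauchy--Schwarz, \eqref{eq:curlterm}, and Theorem~\ref{thm:proj1} combined with \eqref{eq:PV-Lp}, finishing with the triangle and inverse inequalities. You also correctly identify the two points the paper treats only implicitly, namely the fractional-order interpolation of the projection estimates needed for the $\|\uu\|_{l+1/2-1/p}$ norm and the origin of the factor $p$.
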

\begin{proof}
We divide the errors into the projection and the remainder:
$$
\sigma - \sigma_h = (\sigma - \PSig\sigma)+(\PSig\sigma-\sigma_h),
\quad
\uu - \uu_h = (\uu - \PV\uu)+(\PV\uu-\uu_h).
$$
Since,
$$
\|\sigma- \PSig\sigma\|+h\|\sigma- \PSig\sigma\|_1 \le C h^t\|\sigma\|_t 
\le Ch^t\|\uu\|_{t+1}, \quad 1\le t\le r+1,
$$
and, by Theorem~\ref{thm:proj},
$$
\|\uu-\PV\uu\|_{\Hdiv} \le C h^t\|\uu\|_{t+1}, \quad 1\le t\le r,
$$
the projection error satisfies the necessary bounds 
(without the $p\|\uu\|_{W^l_p}$
term on the right-hand side).

Therefore, setting
$ \rho = \sigma_h - \PSig\sigma$ and $\ww = \uu_h - \PV\uu$,
it suffices to show that for $2\le p < \infty$,
 \begin{equation}\label{eq:diffbd}
  \|\rho\|
  + \|\ww\|_{\Hdiv} \\
    \le C
  \bigl(
      \| \sigma - \PSig \sigma \|
     +
      h \|\sigma-{\PSig{\sigma}}\|_1
      +
      h^{-1/2-1/p} \|\uu - {\PV{\uu}}\|_{L^p}
    \bigr).
 \end{equation}
Indeed, both cases of the theorem follow from~\eqref{eq:diffbd}, 
Theorem~\ref{thm:proj}, and 
the inverse inequality
$Ch\|\rho\|_1\le\|\rho\|$.
By the stability result of
 Theorem~\ref{well-pd}, there exists $(\tau,\vv) \in \Sigma_h
\times \0V_h$ satisfying
  \begin{gather*}
  B(  \rho,\ww ;  \tau,\vv ) \ge c
    \left( 
      \| \rho \|_{\Sigma_h}^2 + \| \ww \|_{\Hdiv}^2
    \right),\quad   \| \tau \|_{\Sigma_h} + \| \vv \|_{\Hdiv} 
     \le C 
    \left(  
      \| \rho\|_{\Sigma_h}
      +
      \| \ww\|_{\Hdiv}
    \right).
  \end{gather*}
By Galerkin orthogonality,
  \begin{align*}
B(\rho,\ww ; \tau,\vv )
& = B( \sigma - \PSig\sigma, \uu - \PV\uu ; \tau,\vv ) \\
&= ( \sigma -     \PSig\sigma, \tau) - (\uu-\PV\uu, \curl \tau) 
   + (\curl (\sigma - \PSig
    \sigma) , \vv) ,
 \end{align*}
  where we used the definition of $B$ and \eqref{eq:commute} in the last step.
Applying  the Cauchy-Schwarz inequality,
Theorem~\ref{thm:proj1},  and \eqref{eq:curlterm},  we then obtain
\begin{multline*}
B( \rho,\ww ;  \tau,\vv ) \le C
    \big(
      \| \sigma - \PSig \sigma \|^2 
      +
      h^2 \|\curl (\sigma-{\PSig{\sigma}} )\|^2
      +
      h^{2(-1/2-1/p)} \|\uu - {\PV{\uu}}\|_{L^p}^2
    \big)^{1/2}
 \\
\x   \left(
      \| \tau \|^2  +  \| \vv \|_{\Hdiv}^2 
    \right)^{1/2}.
\end{multline*}
Together, these imply \eqref{eq:diffbd} and so complete the proof 
of the theorem.
\end{proof}

Choosing $p=|\ln h|$ in the theorem gives a limiting estimate.
\begin{cor}
The following estimates hold whenever the right hand side norm is finite:
\begin{multline*}
\|\sigma - \sigma_h\| + h \|\sigma-\sigma_h\|_1
+ \|\uu - \uu_h\|_{\Hdiv}
\\
\le 
C
\begin{cases}
 h^{l-1/2} \left(|\ln h| \|\uu\|_{W_{\infty}^l} +
       \|\uu\|_{l+1/2}\right),\ 2 \le l \le r, & \text{if $r\ge 2$,}
\\
h^{1/2} \left( |\ln h| \|\uu\|_{W_{\infty}^1} + h^{1/2} \|\uu\|_{2} \right),
& \text{if $r= 1$}.
\end{cases}
\end{multline*}
\end{cor}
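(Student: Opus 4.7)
The plan is to derive the corollary as a direct substitution into the estimate of Theorem~\ref{energy-rates}, setting $p=|\ln h|$. The entire argument amounts to observing that three things happen cleanly under this choice: (i) the factor $h^{-1/p}$ is bounded by an absolute constant, (ii) the $L^p$-type norm $\|\uu\|_{W_p^l}$ is controlled by $\|\uu\|_{W_\infty^l}$ uniformly in $h$, and (iii) the fractional Sobolev norm $\|\uu\|_{l+1/2-1/p}$ is controlled by $\|\uu\|_{l+1/2}$. Each of these is straightforward on a bounded convex polygon.

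First I would verify the key identity. For $h < 1$, we have $|\ln h| = -\ln h$, and therefore
\begin{equation*}
  h^{-1/p} \;=\; e^{-(\ln h)/p} \;=\; e^{-(\ln h)/|\ln h|} \;=\; e,
\end{equation*}
so $h^{-1/p}$ is absorbed into the generic constant $C$. Likewise $h^{1/p}\le 1$, so the factor $h^{1/2+1/p}$ appearing in the $r=1$ case of Theorem~\ref{energy-rates} can be replaced by $h^{1/2}$ (up to the constant) after multiplying out with $h^{1/2-1/p}$; a direct computation gives $h^{1/2-1/p}\cdot h^{1/2+1/p}=h$, which matches the target expression $h^{1/2}\cdot h^{1/2}\|\uu\|_2$.

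Next, to replace the Lebesgue exponent, I would use the continuous embeddings on the bounded domain $\Omega$. Since $|\Omega|<\infty$, H\"older's inequality yields $\|\uu\|_{W^l_p}\le |\Omega|^{1/p}\|\uu\|_{W^l_\infty}$, and $|\Omega|^{1/p}=|\Omega|^{1/|\ln h|}$ is bounded (indeed it tends to $1$) as $h\to 0$. Hence $p\|\uu\|_{W_p^l}\le C|\ln h|\,\|\uu\|_{W_\infty^l}$. For the fractional norm term, since $l+1/2-1/p \le l+1/2$ and $\Omega$ is bounded, the embedding $H^{l+1/2}(\Omega)\hookrightarrow H^{l+1/2-1/p}(\Omega)$ is continuous with norm bounded uniformly in $p\in[2,\infty]$; this can be seen by standard interpolation between $L^2$ and $H^{l+1/2}$.

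Putting these three observations together, the two cases of Theorem~\ref{energy-rates} immediately reduce to the two stated cases of the corollary. There is no genuine obstacle here; the only subtlety worth flagging is the verification that the embedding constant for $H^{l+1/2-1/p}(\Omega) \hookleftarrow H^{l+1/2}(\Omega)$ does not blow up as $p\to\infty$, but this follows from standard interpolation. All other steps are bookkeeping absorbed into $C$.
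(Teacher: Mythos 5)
Your proposal is correct and follows exactly the paper's route: the paper obtains the corollary by the single observation ``choosing $p=|\ln h|$ in Theorem~\ref{energy-rates} gives a limiting estimate,'' and your computation of $h^{-1/p}=e$, the bound $p\|\uu\|_{W^l_p}\le C|\ln h|\,\|\uu\|_{W^l_\infty}$, and the uniform control of $\|\uu\|_{l+1/2-1/p}$ by $\|\uu\|_{l+1/2}$ are precisely the bookkeeping the paper leaves implicit.
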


For smooth solutions, choosing the maximum value of $l=r$ 
in the corollary gives
suboptimal approximation of $\sigma$ by order $h^{3/2}$, and suboptimal
approximation of $\uu$ and $\div \uu$ by order $h^{1/2}$ (ignoring
logarithms).  In the next section, we show how to improve the $L^2$ error
estimate for $\uu$ to optimal order.  The other estimates are essentially
sharp, as demonstrated by the numerical experiments already presented.

\subsection{Improved estimates for $\uu-\uu_h$}
\label{improved}
Using duality, we can prove the following estimate
for $\uu-\uu_h$ in $L^2$, which is of optimal order (modulo logarithms
for $r=1$).
\begin{thm}
\label{improved-est}
These estimates hold whenever the right hand side norm is finite:
$$
\|\uu-\uu_h\| \le C
\begin{cases}
  h^l \|\uu\|_l, \ 2 \le l \le r, &\text{if $r\ge 2$},
\\
h\left(|\ln h|^{5/2} \|\uu\|_{W^1_{\infty}} 
+ \|\uu\|_{2}\right), &\text{if $r=1$}.
\end{cases}
$$
\end{thm}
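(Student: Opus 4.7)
My plan is an Aubin--Nitsche duality argument using the adjoint mixed problem. Set $\boldsymbol{e} := \uu - \uu_h$, let $\zz \in \0H^1(\Omega;\R^2)$ solve the Dirichlet vector Laplace problem $-\Lap \zz = \boldsymbol{e}$ in $\Omega$, and set $\phi := -\rot\zz \in H^1$. Because $\Omega$ is a convex polygon, elliptic regularity gives $\|\zz\|_2 + \|\phi\|_1 \le C\|\boldsymbol{e}\|$, and integration by parts shows that $(\phi,\zz)\in\Sigma\times\Hodiv$ satisfies the adjoint identity $B(\tau,\vv;\phi,\zz)=(\vv,\boldsymbol{e})$ for all $(\tau,\vv)\in\Sigma\times\Hodiv$. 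Taking $(\tau,\vv)=(\sigma-\sigma_h,\uu-\uu_h)$ in this identity and subtracting the Galerkin orthogonality $B(\sigma-\sigma_h,\uu-\uu_h;\PSig\phi,\PV\zz)=0$ yields the error representation
\[
\|\boldsymbol{e}\|^2 = B(\sigma-\sigma_h,\uu-\uu_h;\phi-\PSig\phi,\zz-\PV\zz) = T_1 - T_2 + T_3 + T_4,
\]
with the four natural terms arising from expanding $B$.

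The pairings $T_1=(\sigma-\sigma_h,\phi-\PSig\phi)$ and $T_4=(\div(\uu-\uu_h),\div(\zz-\PV\zz))$ are handled routinely: by Cauchy--Schwarz and the standard $O(h)$ bounds $\|\phi-\PSig\phi\|\le Ch\|\phi\|_1$, $\|\div(\zz-\PV\zz)\|\le Ch\|\zz\|_2$ together with the energy estimate of Theorem~\ref{energy-rates}, they contribute at most $Ch^{l+1/2}|\ln h|(\cdots)\|\boldsymbol{e}\|$, better than required. For $T_2=(\curl(\phi-\PSig\phi),\uu-\uu_h)$ I split $\uu-\uu_h=(\uu-\PV\uu)+(\PV\uu-\uu_h)$: the discrete piece lies in $\0\VV_h\subset V_h$, so inequality~\eqref{eq:curlterm} buys an extra factor $h$ and is combined with the energy bound on $\|\div(\uu-\uu_h)\|$ and Theorem~\ref{thm:proj}; the continuous piece is bounded by Cauchy--Schwarz using $\|\uu-\PV\uu\|\le Ch^l\|\uu\|_l$ (Theorem~\ref{thm:proj} at $p=2$) paired with the trivial $\|\curl(\phi-\PSig\phi)\|\le\|\phi\|_1\le C\|\boldsymbol{e}\|$, yielding the desired $O(h^l\|\uu\|_l)\|\boldsymbol{e}\|$ contribution.

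Term $T_3=(\curl(\sigma-\sigma_h),\zz-\PV\zz)$ is the delicate one, treated by the parallel splitting $\sigma-\sigma_h=(\sigma-\PSig\sigma)+(\PSig\sigma-\sigma_h)$. For the discrete piece, with $\PSig\sigma-\sigma_h\in\Sigma_h$, we invoke Theorem~\ref{thm:proj1}, pairing the factor $h^{-1/2-1/p}$ with $\|\zz-\PV\zz\|_{L^p}\le Cph\|\zz\|_{W^1_p}$ from~\eqref{eq:PV-Lp} and with the Sobolev embedding $H^2\hookrightarrow W^{1,p}$, and with the energy bound on $\|\PSig\sigma-\sigma_h\|$. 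For the elliptic-projection piece we write $(\curl(\sigma-\PSig\sigma),\zz-\PV\zz)=(\curl(\sigma-\PSig\sigma),\zz)-(\curl(\sigma-\PSig\sigma),\PV\zz)$: the first term becomes $-(\sigma-\PSig\sigma,\phi)$ after integration by parts (using $\zz=0$ on $\partial\Omega$) and is then controlled by a standard Aubin--Nitsche duality for the Neumann-type elliptic projection $\PSig$, while the second is controlled by~\eqref{eq:curlterm} since $\PV\zz\in V_h$. Selecting $p=|\ln h|$ produces the logarithmic factors in the $r=1$ estimate; for $r\ge 2$, $p=2$ gives the desired optimal $L^2$ rate without logarithms.

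The central obstacle is $T_3$: the orthogonality~\eqref{orthog1} of $\PV$ extends only to $\0\Sigma_h$, not to all of $\Sigma_h$, so a clean duality argument is unavailable; and because $\sigma=\rot\uu$ carries one fewer derivative than $\uu$, the elliptic-projection duality for $\PSig$ and the curl-absorption~\eqref{eq:curlterm} must be arranged so that their respective half-order losses cancel rather than compound. For the lowest-order case $r=1$, the boundary-strip mechanism underlying Theorem~\ref{thm:proj1}, balanced via the choice $p=|\ln h|$, is precisely what generates the $|\ln h|^{5/2}$ factor and the dependence on $\|\uu\|_{W^1_\infty}$ in the stated bound.
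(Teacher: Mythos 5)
Your overall architecture is the paper's: the same dual problem ($-\Lap\ww=\uu-\uu_h$ with Dirichlet conditions, $\phi=-\rot\ww$, $H^2\times H^1$ regularity on the convex polygon), the same four-term error representation after Galerkin orthogonality, the same treatment of $T_1$, $T_2$, $T_4$ (splitting $T_2$ and absorbing the discrete piece via \eqref{eq:curlterm}), and for the delicate part of $T_3$ the same combination of Theorem~\ref{thm:proj1}, the $L^p$ bound \eqref{eq:PV-Lp}, the Sobolev embedding with $p$-explicit constant, and the choice $p=|\ln h|$ for $r=1$.

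The genuine gap is in your treatment of the elliptic-projection piece of $T_3$ when $r\ge 2$. By writing $(\curl(\sigma-\PSig\sigma),\zz-\PV\zz)=(\curl(\sigma-\PSig\sigma),\zz)-(\curl(\sigma-\PSig\sigma),\PV\zz)$ you forfeit the $O(h^2)$ smallness of $\zz-\PV\zz$, and each separated term then has to extract the full factor $h^l$ from $\sigma-\PSig\sigma$ alone. Under the stated regularity the theorem only provides $\uu\in H^l$, hence $\sigma=\rot\uu\in H^{l-1}$, so \eqref{eq:PS-err} gives at best $\|\sigma-\PSig\sigma\|\le Ch^{l-1}\|\uu\|_l$ and $\|\curl(\sigma-\PSig\sigma)\|\le Ch^{l-2}\|\uu\|_l$. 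Your first term $-(\sigma-\PSig\sigma,\phi)$ is then $O(h^{l-1})\|\uu\|_l\|\uu-\uu_h\|$; the Aubin--Nitsche duality you invoke cannot buy the missing power of $h$, since that would require either $\sigma\in H^l$ (i.e.\ $\uu\in H^{l+1}$) or $H^3$ regularity of the auxiliary Neumann problem with $H^1$ data, neither of which is available on a convex polygon. Your second term fares no better: \eqref{eq:curlterm} gives $Ch\|\curl(\sigma-\PSig\sigma)\|\,\|\div\PV\zz\|\le Ch^{l-1}\|\uu\|_l\|\uu-\uu_h\|$, since $\|\div\PV\zz\|$ is only $O(\|\uu-\uu_h\|)$. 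Both pieces are one order short of the claimed $h^l\|\uu\|_l$. The repair is not to split at all: for $r\ge 2$ the paper bounds the whole of $T_3$ by Cauchy--Schwarz, pairing $\|\curl(\sigma-\sigma_h)\|\le Ch^{l-2}\|\uu\|_l$ (Theorem~\ref{energy-rates} with $p=2$) against $\|\ww-\PV\ww\|\le Ch^2\|\ww\|_2\le Ch^2\|\uu-\uu_h\|$, which is exactly where the hypothesis $r\ge 2$ enters via \eqref{eq:PV-Lp}. Your splitting of $\sigma-\sigma_h$ and the Theorem~\ref{thm:proj1} mechanism are needed only for $r=1$, where your argument (and the $|\ln h|^{5/2}$ bookkeeping) is sound.
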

\begin{proof}
Define $\phi\in\Sigma$, $\ww\in\Hodiv$ by
\begin{equation*}
B(\tau,\vv;\phi,\ww)  = (\vv, \uu-\uu_h), 
 \quad \tau \in \Sigma,\ \vv \in \Hodiv.
\end{equation*}
Thus $\ww$ solves the Poisson equation $-\Delta\ww=\uu-\uu_h$ in $\Omega$
with homogeneous Dirichlet boundary conditions, and $\phi=-\rot\ww$.
Under our assumption that $\Omega$ is a convex polygon, 
we know that $\ww\in H^2$, $\phi\in H^1$, and
$\|\phi\|_1 + \|\ww\|_2 \le C \|\uu-\uu_h\|$.

Choosing $\tau=\sigma-\sigma_h$ and $\vv=\uu-\uu_h$ and then using Galerkin
orthogonality, we obtain
\begin{align*}
\|\uu-\uu_h\|^2 &=B(\sigma-\sigma_h,\uu-\uu_h;\phi,\ww)
= B(\sigma-\sigma_h,\uu-\uu_h;\phi-\PSig\phi,\ww-\PV\ww).
\end{align*}
The right hand side is the sum of following four terms:
\begin{align*}
 T_1 &= (\sigma-\sigma_h,\phi-\PSig\phi), 
& T_2 &=-(\uu-\uu_h,\curl[\phi-\PSig\phi]),
\\
 T_3 & = (\curl[\sigma-\sigma_h],\ww-\PV\ww), 
& T_4 &=(\div[\uu-\uu_h],\div[\ww-\PV\ww]).
\end{align*}
We have replaced $\langle\cdot,\cdot\rangle$ by the $L^2$-inner products
because $\phi \in H^1$ and $\sigma=\rot \uu$ is in $H^1$ whenever the
right hand side norm in the theorem is finite.
For $T_1$, we use the
Cauchy--Schwarz inequality, the bound $\|\phi-\PSig\phi\|\le Ch \|\phi\|_1\le
Ch\|\uu-\uu_h\|$ for the elliptic projection, and the estimate 
of Theorem~\ref{energy-rates} with $p=2$ to obtain 
\begin{gather*}
|T_1|\le
C
\begin{cases}
h^l \|\uu\|_l \|\uu-\uu_h\|, \ 2\le l \le r, &\text{if $r\ge 2$,}
\\
h(\|\uu\|_1 + h\|\uu\|_2)\|\uu-\uu_h\|, &\text{if $r =1$.}
\end{cases}
\end{gather*}
Similar considerations give the same bound for $T_4$.

To bound $T_2$, we split it as $(\PV\uu -\uu,\curl[\phi-\PSig\phi])$ and
$T_2'=(\uu_h-\PV\uu,\curl[\phi-\PSig\phi])$. The first term is clearly bounded
by $Ch^l\|\uu\|_l\|\uu-\uu_h\|$, while, for the second, we use
\eqref{eq:curlterm} to find that
$$
|T_2'|\le C h \|\curl(\phi-\PSig\phi)\|\|\div(\uu_h-\PV\uu)\|.
$$
Bounding $\div(\uu_h-\PV\uu)$ via Theorem~\ref{energy-rates} and 
\eqref{eq:PV-div}, we get
\begin{align*}
|T_2'| &\le Ch^l \|\uu\|_l \|\uu-\uu_h\|, \quad 2 \le l \le r,
\\
|T_2'| &\le Ch(\|\uu\|_1 + h\|\uu\|_2)\|\uu-\uu_h\|, \quad r =1.
\end{align*}
Finally, we bound $T_3$.  If $r\ge 2$, then we simply use the Cauchy--Schwarz
inequality, the bound
\begin{equation}\label{eq:wb}
\|\ww-\PV\ww\|\le Ch^2\|\ww\|_2\le Ch^2 \|\uu-\uu_h\|,
\end{equation}
and the $p=2$ case of Theorem~\ref{energy-rates} to obtain
\begin{equation*}
|T_3|\le Ch^l \|\uu\|_l \|\uu-\uu_h\|, \quad 2\le l\le r.
\end{equation*}
If $r=1$, then \eqref{eq:wb} does not hold.  Instead we
split $T_3$ as  $(\curl[\sigma-\PSig\sigma],\ww-\PV\ww)+
 (\curl[\PSig\sigma-\sigma_h],\ww-\PV\ww)$.  Since
$\|\ww-\PV\ww\|\le Ch\|\ww\|_1\le Ch\|\uu-\uu_h\|$,
the first term is bounded by $C h \|\sigma\|_1 \|\uu-\uu_h\|
\le C h \|\uu\|_2 \|\uu-\uu_h\|$. For the second, 
we apply Theorem~\ref{thm:proj1} and \eqref{eq:PV-Lp} to obtain
\begin{multline*}
|(\curl[\PSig\sigma-\sigma_h],\ww-\PV\ww)|
\le C h^{-1/2-1/p}\|\ww-\PV\ww\|_{L^p}\|\PSig\sigma-\sigma_h\|
\\
\le C h^{1/2-1/p} p \|\ww\|_{W_p^1}\|\PSig\sigma-\sigma_h\|, \quad 2\le p<\infty.
\end{multline*}
By the Sobolev inequality, $\|\ww\|_{W_p^1}\le K_p\|\ww\|_{W^2_q}$, where
$q=2p/(2+p)<2$.  Moreover, from \cite{talenti} and a simple extension
argument the constant $K_p\le Cp^{1/2}$.  Since $\|\ww\|_{W^2_q}\le C \|\ww\|_2$ with
$C$ depending only on the area of the domain, we obtain
$$
|(\curl[\PSig\sigma-\sigma_h],\ww-\PV\ww)|\le C h^{1/2-1/p} p^{3/2}\|\PSig\sigma-\sigma_h\|\|\uu-\uu_h\|, \quad 2\le p<\infty.
$$
By \eqref{eq:PS-err} and Theorem~\ref{energy-rates} with $r=1$,
$$
\|\PSig\sigma-\sigma_h\|\le \|\sigma-\PSig\sigma\|+\|\sigma-\sigma_h\| \le C (h^{1/2-1/p}p\|\uu\|_{W^1_p} + h \|\uu\|_2).
$$
Thus we obtain 
$$
|T_3|\le C\left(h^{1-2/p} p^{5/2}  \|\uu\|_{W^1_p}+ h^{3/2-1/p}p^{3/2}\|\uu\|_{2}\right)
\|\uu-\uu_h\|, \quad 2 \le p < \infty,
$$
and, by choosing $p= |\ln h|$ and noting that $h^{1/2}|\ln h|^{3/2}$ is bounded,
$$
|T_3|\le Ch  \left( |\ln h|^{5/2} \|\uu\|_{W^1_{\infty}} + \|\uu\|_{2}\right)
\|\uu-\uu_h\|.
$$
The theorem follows easily from these estimates.
 \end{proof}

\section{The Ciarlet-Raviart Mixed Method for the Biharmonic}
\label{sec:biharmonic}
In this section, we show that the above analysis immediately gives
estimates for the Ciarlet--Raviart mixed method for the biharmonic,
including some new estimates which improve on those available in
the literature.

Given $g \in H^{-2}(\Omega) = (\0 H^2(\Omega))'$, the
standard weak formulation of the Dirichlet problem for the 
biharmonic seeks $U \in \0 H^2$ such that
\begin{equation*}
(\Lap U, \Lap V) = (g,V), \quad V \in \0 H^2.
\end{equation*}
Letting $\sigma:=-\Delta U\in L^2$, we have $\Delta\sigma=-g$.
Assuming that $g \in H^{-1}(\Omega)$, as we henceforth shall,
for $\Omega$ a convex polygon, we have that $U \in H^3(\Omega)$,
$\sigma \in H^1(\Omega)$ and
\begin{equation*}
\|U\|_3 + \|\sigma\|_1 \le C \|g\|_{-1}.
\end{equation*}
Hence $(\sigma,U)\in H^1\times \0 H^1$ satisfy
\begin{gather*}
(\sigma, \tau) - (\curl U, \curl \tau) =0, \quad \tau \in H^1,
\\
(\curl \sigma, \curl V) = (g, V), \quad V \in \0 H^1.
\end{gather*}
We note that a mixed formulation in these variables, but with spaces that are
less regular, can also be given for this problem (c.f.~\cite{b-g-m}),
but we shall not pursue this approach here.

The Ciarlet--Raviart mixed method \cite{ciarlet-raviart} for the approximation
of the Dirichlet problem for the biharmonic equation using Lagrange elements
of degree $r$, seeks $\sigma_h \in \Sigma_h$, $U_h \in \0 \Sigma_h$ such that
\begin{gather*}
(\sigma_h, \tau) - (\curl U_h, \curl \tau) =0, \quad \tau \in \Sigma_h,
\\
(\curl \sigma_h, \curl V) = (g, V), \quad V \in \0 \Sigma_h.
\end{gather*}
This discretization has been analyzed in many papers under the assumption
that $\Omega$ is a convex polygon. It is proved in
\cite{falk-osborn} and \cite{b-o-p} that for $r \ge 2$,
\begin{equation*}
\|U-U_h\|_1 \le C h^{r} \|U\|_{r+1}, \qquad
\|\sigma - \sigma_h\| \le C h^{r-1} \|U\|_{r+1}.
\end{equation*}
The former estimate is optimal, while the estimate for 
$\|\sigma-\sigma_h\|$ is two orders suboptimal.
The case $r=1$ was analyzed in \cite{scholz}, where it was proven that
\begin{equation*}
\|U-U_h\|_1 \le C h^{3/4} |\ln h|^{3/2} \|U\|_{4}, \qquad
\|\sigma - \sigma_h\| \le C h^{1/2} |\ln h| \|U\|_{4}.
\end{equation*}
These estimates are suboptimal by $1/4$ and $3/2$ orders respectively (modulo
logarithms) and require $H^4$ regularity of $U$.  (As noted in \cite{scholz},
the same technique could be applied for $r\ge 2$ to obtain a $3/2$ suboptimal
estimate on $\|\sigma-\sigma_h\|$.)  Below we improve the estimate
on $\|U-U_h\|_1$ for $r=1$ to an optimal order estimate (modulo logarithms), 
with decreased assumptions on the regularity of the solution $U$.

We now show how to obtain all of these results from the analysis
of the previous section, with only minor modifications.
Let $\uu=\curl U$.  Then
$$
 B(\sigma,\uu;\tau,\curl V) = (g,V),
 \quad (\tau,V)\in H^1\x\0H^1.
$$
Similarly, with $\uu_h=\curl U_h$,
\begin{equation*}
 B(\sigma_h,\uu_h;\tau,\curl V) = (g,V), 
\quad (\tau,V)\in\Sigma_h\x\0\Sigma_h.
\end{equation*}
As above, set $\rho=\sigma_h-\PSig\sigma\in\Sigma_h$, 
$\ww=\uu_h-\PV\uu\in\0V_h$.  Note that
$\ww=\curl U_h -\curl\PSigo U\in\curl\0\Sigma_h$.  Subtracting the above equations and
writing $\vv$ for $\curl V$, we have
$$
B(\rho,\ww;\tau,\vv)= B(\sigma-\PSig\sigma,\uu-\PV\uu;\tau,\vv), 
\quad (\tau,\vv)\in \Sigma_h\x\curl\0\Sigma_h.
$$
Since the stability result of Theorem~\ref{well-pd} holds over the space
$\Sigma_h\x\curl\0\Sigma_h$, as stated in the last sentence of
the theorem, we can argue exactly as in proof of
Theorem~\ref{energy-rates} and conclude that the estimates proved in that
theorem for the Hodge Laplacian hold as well in this context with
one improvement.  To estimate the term $\|\uu-\PV\uu\|_{L^p}$ in 
\eqref{eq:diffbd}, instead of using \eqref{eq:PV-Lp}, we note that
$\|\uu-\PV\uu\|_{L^p}=\|\curl(U-\PSigo U)\|_{L^p}$
and invoke \eqref{w1p}.  In this way we avoid a factor of $p$.
The improved estimates of Theorem~\ref{improved-est}
also translate to this problem, with essentially
the same proof and a similar improvement.
The dual problem is, of course, now taken to be: Find
$\phi \in \Sigma, \ww \in \curl \0H^1$ such that
\begin{equation*}
B(\tau, \vv; \phi,\ww) = (\vv, \uu-\uu_h), \qquad \tau \in \Sigma, \, \vv
\in \curl \0H^1.
\end{equation*}
Thus $\ww = \curl W$, where $W$ solves the biharmonic problem $\Lap^2 W =
\rot(\uu-\uu_h) \in H^{-1}$ with Dirichlet boundary conditions, and $\phi =
\Lap W$.  The relevant regularity result, valid on a convex domain, is
\begin{equation*}
\|\ww\|_2 + \|\phi\|_1 \le C\|W\|_3 \le C \|\rot(\uu-\uu_h)\|_{-1} 
\le C \|\uu-\uu_h\|.
\end{equation*}
The remainder of the proof goes through as before, with the simplification
that now the terms $T_4$ and $T_2^{\prime}$ are zero, and the term
$\|\ww-\PV\ww\|_{L^p}$ can be bounded without introducing a factor of $p$ as
just described.  The suppressed factors of $p$ lead to fewer logarithms in the
final result.  Stating this result in terms of the original variable $U$
instead of $\uu=\curl U$, we have the following theorem.

\begin{thm}
 \label{energy-rates-b}
Let $U$ solve the Dirichlet problem for the biharmonic equation, 
$\sigma=-\Delta U$, and let $U_h\in\0\Sigma_h$, $\sigma_h\in\Sigma_h$ denote the
discrete solution obtained by the Ciarlet--Raviart mixed method with Lagrange
elements of degree $r\ge 1$.  If $r\ge2$ and $2 \le l \le r$, then the
following estimates, requiring differing amounts of regularity, hold
whenever the norms on the right hand side are finite:
\begin{align*}
\|\sigma - \sigma_h\| + h \|\sigma-\sigma_h\|_1
&\le 
C\left\{\begin{array}{l}
       h^{l-1} \|U\|_{l+1}
       \\
       h^{l-1/2} \left(\|U\|_{W_{\infty}^{l+1}} + \|U\|_{l+3/2}\right)
\end{array}\right.,
\\
\|U - U_h\|_1 &\le C h^l \|U\|_{l+1}.
\\
\intertext{If $r=1$, the estimates are:}
\|\sigma - \sigma_h\| + h \|\sigma-\sigma_h\|_1
&\le 
C\left\{\begin{array}{l}
       \left(\|U\|_{2} + h \|U\|_{3}\right),
       \\
       h^{1/2} \left( \|U\|_{W_{\infty}^2} + h^{1/2} \|U\|_{3} \right)
\end{array}\right.,
\\
\|U - U_h\|_1 &\le Ch (|\ln h|^{1/2}\|U\|_{W_{\infty}^{2}} +\|U\|_3).
\end{align*}
\end{thm}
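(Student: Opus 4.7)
The plan is to reduce the analysis entirely to that already carried out for the mixed discretization of the vector Laplacian. Setting $\uu = \curl U$ and $\uu_h = \curl U_h$, the second commutativity property in~\eqref{eq:commute} gives
\[
\uu_h - \PV\uu = \curl U_h - \PV\curl U = \curl(U_h - \PSigo U) \in \curl\0\Sigma_h.
\]
Subtracting the continuous and discrete mixed equations therefore yields Galerkin orthogonality in the restricted form
\[
B(\sigma_h-\PSig\sigma,\,\uu_h-\PV\uu;\,\tau,\vv) = B(\sigma-\PSig\sigma,\,\uu-\PV\uu;\,\tau,\vv), \quad (\tau,\vv)\in\Sigma_h\times\curl\0\Sigma_h,
\]
and, by the last sentence of Theorem~\ref{well-pd}, the inf-sup stability of $B$ is preserved when the test pair is required to lie in this smaller subspace. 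The energy argument of Theorem~\ref{energy-rates} therefore transfers almost verbatim.

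Only the term $h^{-1/2-1/p}\|\uu-\PV\uu\|_{L^p}$ appearing in~\eqref{eq:diffbd} needs re-examination: since $\uu - \PV\uu = \curl(U-\PSigo U)$, one may bypass \eqref{eq:PV-Lp} (and with it the $p$-dependent constant of Theorem~\ref{thm:proj}) and bound this expression instead by $Ch^{-1/2-1/p}\|U-\PSigo U\|_{W^1_p}$ using the Rannacher--Scott estimate~\eqref{w1p}, whose constant is $p$-independent. The eventual choice $p=|\ln h|$ then produces one fewer logarithm. Combined with the routine approximation bounds~\eqref{eq:PS-err} (and, for the first of the two displayed variants, the inverse inequality $\|\curl\tau\|\le Ch^{-1}\|\tau\|$ applied in the term $(\uu-\PV\uu,\curl\tau)$ together with the basic $L^2$ projection error), this yields both stated bounds on $\|\sigma-\sigma_h\|+h\|\sigma-\sigma_h\|_1$.

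For the $H^1$ estimate on $U-U_h$, I would first invoke the Poincar\'e inequality~\eqref{pf} on $\0H^1$ to reduce to an $L^2$ bound on $\uu-\uu_h = \curl(U-U_h)$, and then run a duality argument modeled on Theorem~\ref{improved-est}. The dual problem is now posed on $\Sigma\times\curl\0H^1$: find $\phi\in\Sigma$ and $\ww=\curl W\in\curl\0H^1$ satisfying $B(\tau,\vv;\phi,\ww) = (\vv,\uu-\uu_h)$, where $W\in\0H^2$ solves the biharmonic Dirichlet problem $\Lap^2 W = \rot(\uu-\uu_h)$; convexity of $\Omega$ yields $\|\phi\|_1 + \|\ww\|_2 \le C\|W\|_3 \le C\|\uu-\uu_h\|$. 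Two simplifications occur in the four-term splitting $T_1+T_2+T_3+T_4$ of Theorem~\ref{improved-est}: since $\ww$ is divergence-free, the first identity of~\eqref{eq:commute} gives $\div\PV\ww = \PS\div\ww = 0$, which kills $T_4$; and since $\uu_h - \PV\uu\in\curl\0\Sigma_h$ is also divergence-free, the subterm $T_2'$ vanishes as well.

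The main obstacle is the treatment of $T_3$ in the $r=1$ case, where the $O(h^2)$ bound on $\|\ww-\PV\ww\|$ is unavailable. As before, the key trick is to exploit $\ww - \PV\ww = \curl(W-\PSigo W)$ and to bound $\|\ww-\PV\ww\|_{L^p}$ via~\eqref{w1p} with a $p$-independent constant rather than through~\eqref{eq:PV-Lp}, saving one factor of $p$. The very same improvement applies to the energy bound on $\|\PSig\sigma-\sigma_h\|$ that is fed into $T_3$, so at the final choice $p=|\ln h|$ the logarithmic exponent drops from the $5/2$ of Theorem~\ref{improved-est} down to $1/2$, producing exactly the $|\ln h|^{1/2}\|U\|_{W^2_\infty}$ factor claimed. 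All remaining algebraic steps are identical to those in the proofs of Theorems~\ref{energy-rates} and~\ref{improved-est}.
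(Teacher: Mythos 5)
Your proposal is correct and follows essentially the same route as the paper: reduce to the vector-Laplacian analysis via $\uu=\curl U$, note that $\uu_h-\PV\uu\in\curl\0\Sigma_h$ so that the restricted stability statement of Theorem~\ref{well-pd} applies, replace the $p$-dependent bound \eqref{eq:PV-Lp} by \eqref{w1p} applied to $\curl(U-\PSigo U)$, and in the duality step observe that $T_4$ and $T_2'$ vanish while the same $p$-saving applies to $\|\ww-\PV\ww\|_{L^p}$, yielding the reduced power of $|\ln h|$. The only (harmless) deviations are cosmetic, e.g.\ your alternative derivation of the first variant via an inverse inequality rather than simply taking $p=2$ in Theorem~\ref{energy-rates}.
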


\section{Stationary Stokes equations}
\label{sec:stokes}

Another application in which the vector Laplacian with Dirichlet boundary
conditions arises is the stationary Stokes equations, in which the vector
field represents the velocity, subject to no-slip conditions on the boundary.
A standard weak formulation (with viscosity equal to one) seeks $\uu \in \0
H^1(\Omega, \R^2)$ and $p \in \hat L^2$ such that
\begin{gather*}
(\grad \uu, \grad \vv) - (p, \div \vv) = (\ff, \vv), 
\quad \vv \in \0 H^1(\Omega, \R^2),
\\
(\div \uu, q) =0, \quad q \in L^2.
\end{gather*}
Mixed methods, such as we have discussed, have been used to approximate this
problem, based on the vorticity-velocity-pressure formulation. For example,
using the spaces defined in Section~\ref{sec:analysis}, the following weak
formulation is discussed in \cite{d-s-s-2}.  Find $\sigma \in \Sigma$, $\uu
\in \Hodiv$, $p \in \hat L^2$ such that
\begin{gather*}
(\sigma, \tau) - \<\curl \tau,\uu\> =0, \quad \tau \in \Sigma,
\\
\<\curl \sigma, \vv\> -(p, \div \vv) = (\ff,\vv), \quad
\vv \in \Hodiv.
\\
(\div \uu, q) = 0, \quad q \in L^2.
\end{gather*}
This formulation is obtained just as for the vector Laplacian, by
writing
\begin{equation*}
(\grad \uu, \grad \vv) = (\rot \uu, \rot \vv) + (\div \uu, \div \vv)
\end{equation*}
and introducing the variable $\sigma = \rot \uu$.
When $\ff \in L^2(\Omega;\R^2)$ and $\Omega$ is a convex polygon, $\uu \in
H^2(\Omega;\R^2)$, $p \in \hat H^1(\Omega)$, and $\sigma = \rot \uu \in
H^1(\Omega)$.  Assuming this extra regularity, and setting $\uu = \curl U$,
and $\vv = \curl V$, $(\sigma, U) \in H^1 \times \0 H^1$ satisfy the stream
function-vorticity equations:
\begin{gather*}
(\sigma, \tau) - (\curl U, \curl \tau) =0, \quad \tau \in H^1
\\
(\curl \sigma, \curl V) = (\ff,\curl V), \quad V \in \0 H^1.
\end{gather*}
Taking $g = \rot \ff$, this formulation coincides with the mixed formulation of
the biharmonic problem discussed in the previous section.  

We consider here the finite element approximation which
seeks $\sigma_h \in \Sigma_h$, $\uu_h \in \0{\VV}_h$, $p_h \in \hat S_h$
such that
\begin{gather*}
(\sigma_h, \tau) - (\uu_h, \curl \tau) =0, \quad \tau \in \Sigma_h,
\\
(\curl \sigma_h, \vv) -(p_h, \div \vv)  = (\ff,\vv), \quad \vv \in \0{\VV}_h.
\\
(\div \uu, q) = 0, \quad q \in\hat S_h .
\end{gather*}
where the spaces $\Sigma_h$, $\0{\VV}_h$, and $\hat S_h$ are defined as above.
The existence and uniqueness of the solution is easily
established by standard methods.  When $\ff=0$, we get by choosing $\tau =
\sigma_h$, $\vv = \uu_h$, $q=p_h$ and adding the equations that $\sigma_h=0$
and $\div \uu_h =0$.  Hence $\uu_h = \curl U_h$, $U_h \in \0 \Sigma_h$, and
choosing $\tau = U_h$, we see that $\curl U_h =0$.  Since $\div \0 \VV_h = \hat
S_h$, we also get $p_h=0$.

Error estimates for $\|\uu-\uu_h\|$ and $\|\sigma- \sigma_h\|$ are easily
obtained by reducing the problem to its stream function-vorticity form
and using the estimates obtained in the previous section. Letting
$\uu_h = \curl U_h$, and choosing $\vv = \curl V$, $V \in \0\Sigma_h$,
we see that $(\sigma_h, U_h)$ is the unique solution
of the Ciarlet-Raviart formulation of the biharmonic with $g = \rot \ff$.
Hence, the estimates for $\sigma-\sigma_h$ in Theorem~\ref{energy-rates-b}
remain unchanged, except that we can replace $\|U\|_s$ by $\|\uu\|_{s-1}$.
In particular, we have the following theorem.
\begin{thm}
 \label{energy-rates-s}
Let $(\uu,p)$ solve the Dirichlet problem for the Stokes equation, $\sigma=\rot
\uu$, and let $\uu_h\in\0\VV_h$, $\sigma_h\in\Sigma_h$, and $p_h \in \hat S_h$
denote the discrete solution obtained by the vorticity-velocity-pressure mixed
method with $r\ge 1$ the polynomial degree.  If $r\ge2$ and $2 \le l \le
r$, then the following estimates, requiring differing amounts of regularity,
hold whenever the norms on the right hand side are finite:
\begin{align*}
\|\sigma - \sigma_h\| + h \|\sigma-\sigma_h\|_1
&\le 
C\left\{\begin{array}{l}
       h^{l-1} \|\uu\|_{l},
       \\
       h^{l-1/2} \left(\|\uu\|_{W_{\infty}^{l}} + \|\uu\|_{l+1/2}\right),
\end{array}\right.
\\
\|\uu - \uu_h\| &\le C h^l \|\uu\|_{l}.
\\
\intertext{If $r=1$, the estimates are:}
\|\sigma - \sigma_h\| + h \|\sigma-\sigma_h\|_1
&\le 
C\left\{\begin{array}{l}
       \|\uu\|_{1} + h \|\uu\|_{2},
       \\
       h^{1/2} \left( \|\uu\|_{W_{\infty}^1} + h^{1/2} \|\uu\|_{2} \right),
\end{array}\right.
\\
\|\uu - \uu_h\| &\le Ch (|\ln h|^{1/2}\|\uu\|_{W_{\infty}^{1}} +\|\uu\|_2).
\end{align*}
\end{thm}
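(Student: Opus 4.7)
The plan is to reduce the discrete Stokes problem to the Ciarlet--Raviart discretization of the biharmonic equation (with source $g=\rot\ff$) and then invoke Theorem~\ref{energy-rates-b}. Everything hinges on recognizing that the discrete velocity is exactly divergence-free: testing the third equation with $q=\div\uu_h\in\hat S_h$ (note that $\uu_h\in\0\VV_h$ implies $\int_\Omega\div\uu_h=0$) yields $\div\uu_h=0$. By the discrete Hodge decomposition \eqref{eq:HodgeA} (or equivalently, exactness of the second sequence in \eqref{seqs}), there exists a unique $U_h\in\0\Sigma_h$ with $\uu_h=\curl U_h$.

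Next, I would test the second discrete equation only with $\vv=\curl V$ for $V\in\0\Sigma_h$. Then $\div\vv=0$, so the pressure term drops out, leaving
\begin{equation*}
(\curl\sigma_h,\curl V) = (\ff,\curl V) = (\rot\ff,V), \quad V\in\0\Sigma_h.
\end{equation*}
Combined with the first equation written as $(\sigma_h,\tau)-(\curl U_h,\curl\tau)=0$ for $\tau\in\Sigma_h$, this shows that $(\sigma_h,U_h)$ is precisely the Ciarlet--Raviart mixed approximation with right-hand side $g=\rot\ff$. At the continuous level, the same computation using the stream function $U\in \0H^1$ (so that $\uu=\curl U$) shows that $(\sigma,U)$ solves the continuous Ciarlet--Raviart system with the same $g$. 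The pressure $p_h$ plays no role in this reduction and need not be estimated.

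With the reduction in hand, I would apply Theorem~\ref{energy-rates-b} directly. The $\sigma-\sigma_h$ bounds transfer verbatim, since the discrete vorticities are identical in the two problems. For the velocity, I would use $\uu-\uu_h=\curl(U-U_h)$ and the identity $\|\curl W\|=|W|_1$ to convert $\|U-U_h\|_1$ into $\|\uu-\uu_h\|$ (after possibly absorbing the lower-order $L^2$ term via Poincar\'e, which applies since $U-U_h\in\0H^1$). Similarly, the regularity norms on the right-hand side of Theorem~\ref{energy-rates-b} are expressed in terms of $U$: using $\uu=\curl U$, Poincar\'e's inequality on $\0H^1$, and standard elliptic regularity on the convex polygon $\Omega$, one has $\|U\|_s\le C\|\uu\|_{s-1}$ and similarly $\|U\|_{W^s_\infty}\le C\|\uu\|_{W^{s-1}_\infty}$ (up to the appropriate $s$), which is exactly the shift appearing in the statement of Theorem~\ref{energy-rates-s}.

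I do not expect any genuine obstacle; the work is entirely in the reduction step and bookkeeping of norms. The one point that deserves a brief check is the transfer of regularity norms: I would verify that the relevant Sobolev-norm inequalities between the stream function $U$ and the velocity $\uu=\curl U$ hold with the indices stated, so that the $r\ge2$ and $r=1$ branches of the biharmonic theorem produce exactly the indices written for the Stokes theorem.
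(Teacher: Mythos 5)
Your proposal is correct and follows essentially the same route as the paper: test the incompressibility equation to get $\div\uu_h=0$, use exactness of the discrete sequence to write $\uu_h=\curl U_h$ with $U_h\in\0\Sigma_h$, observe that restricting test functions to $\vv=\curl V$ eliminates the pressure and identifies $(\sigma_h,U_h)$ with the Ciarlet--Raviart solution for $g=\rot\ff$, and then transfer the bounds of Theorem~\ref{energy-rates-b} with the norm shift $\|U\|_s\mapsto\|\uu\|_{s-1}$. This is exactly the argument given in the paper.
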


The only item remaining is to derive error bounds for the approximation
of the pressure. We obtain the following result, which gives error
bounds that are suboptimal by $O(h^{1/2})$.
\begin{thm}
\label{pressure-est}
If $r\ge2$ and $2 \le l \le r$, then
\begin{equation*} 
\|p-p_h\| \le C \begin{cases}
h^{l-1} (\|\uu\|_l + \|p\|_{l-1}), 
\\
h^{l-1/2} \left(\|\uu\|_{W_{\infty}^l} +
       \|\uu\|_{l+1/2} + \|p\|_{l-1/2} \right).
\end{cases}
\end{equation*}
If $r=1$, the estimates are
\begin{equation*} 
\|p-p_h\| \le C \begin{cases}
\|\uu\|_1 + h \|\uu\|_2 + \|p\|,
\\
h^{1/2} \left(\|\uu\|_{W_{\infty}^1} +
       h^{1/2}\|\uu\|_{2} + \|p\|_{1/2} \right).
\end{cases}
\end{equation*}
\end{thm}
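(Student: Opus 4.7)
The strategy is to derive a pressure error equation by subtracting the
discrete and continuous momentum balances, and then to turn the estimates on
$\sigma-\sigma_h$ from Theorem~\ref{energy-rates-s} into an estimate on
$\|p-p_h\|$ via a Fortin construction and integration by parts. From the
second equations of the discrete and continuous systems, one obtains
$$
(p-p_h,\div\vv) = (\curl(\sigma-\sigma_h),\vv),\qquad \vv\in\0\VV_h.
$$
I would split $p-p_h=(p-\PS p)+(\PS p - p_h)$ and bound the first piece by
standard $L^2$ approximation in $S_h$: a rate $Ch^{l-1}\|p\|_{l-1}$ matching
the first alternative, a rate $Ch^{l-1/2}\|p\|_{l-1/2}$ (obtained by
interpolation between the integer cases) for the second alternative, and the
trivial estimate $\|p\|$ in the $r=1$ base case.

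For the remaining term $\PS p - p_h\in\hat S_h$, the continuous Stokes
inf--sup supplies $\tilde\vv\in\0H^1(\Omega;\R^2)$ with $\div\tilde\vv=\PS p -
p_h$ and $\|\tilde\vv\|_1\le C\|\PS p - p_h\|$. Setting $\vv=\PiV\tilde\vv$,
the vanishing of $\tilde\vv$ on $\partial\Omega$ forces all boundary normal
moments of $\vv$ to vanish, so $\vv\in\0\VV_h$; and the commuting property
$\div\PiV=\PS\div$ gives $\div\vv=\PS p - p_h$. Since $p-\PS p$ is
$L^2$-orthogonal to $S_h$ and $\div\vv\in S_h$,
$$
\|\PS p - p_h\|^2 = (p-p_h,\div\vv) = (\curl(\sigma-\sigma_h),\vv).
$$
I would then split $\vv = \tilde\vv+(\vv-\tilde\vv)$. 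On the $\tilde\vv$ part,
integration by parts produces no boundary contribution because both components
of $\tilde\vv$ vanish on $\partial\Omega$, so
$(\curl(\sigma-\sigma_h),\tilde\vv)=(\sigma-\sigma_h,\rot\tilde\vv)\le
\|\sigma-\sigma_h\|\,\|\tilde\vv\|_1$. For the remainder,
$\|\tilde\vv-\PiV\tilde\vv\|\le Ch\|\tilde\vv\|_1$ yields a contribution of
$Ch\|\sigma-\sigma_h\|_1\|\tilde\vv\|_1$. Combining these with
$\|\tilde\vv\|_1\le C\|\PS p - p_h\|$ gives
$$
\|\PS p - p_h\|\le C\bigl(\|\sigma-\sigma_h\|+h\|\sigma-\sigma_h\|_1\bigr),
$$
and Theorem~\ref{energy-rates-s} then furnishes the desired rates in all four
cases of the statement after applying the triangle inequality.

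The main obstacle is the boundary integral that would otherwise appear during
integration by parts: an arbitrary $\vv\in\0\VV_h$ has only its normal
component vanishing on $\partial\Omega$, and its tangential boundary trace
cannot be absorbed because we have no useful control over the trace of
$\sigma-\sigma_h$. The Fortin detour through $\tilde\vv\in\0H^1$ eliminates
this term cleanly at the cost of a consistency error proportional to
$h\|\sigma-\sigma_h\|_1$, which combines with the stronger $L^2$ bound on
$\sigma-\sigma_h$ in exactly the pattern produced by
Theorem~\ref{energy-rates-s}, so that the $1/2$-order suboptimality inherited
from the second-alternative rate on $\sigma$ is transferred intact to the
pressure.
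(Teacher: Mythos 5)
Your proposal is correct and follows essentially the same route as the paper: the identical error equation, the same choice of $\tilde\vv\in\0H^1$ with $\div\tilde\vv$ equal to the discrete pressure error followed by the Fortin interpolant $\PiV\tilde\vv$, the same split into $(\sigma-\sigma_h,\rot\tilde\vv)$ plus a term controlled by $h\|\curl(\sigma-\sigma_h)\|$, and the same final reduction to Theorem~\ref{energy-rates-s}. The only differences are cosmetic bookkeeping (working with $\PS p-p_h$ from the outset versus deriving its orthogonality along the way).
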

\begin{proof}
From the variational formulation, we get the error equation
\begin{equation*}
(p_h - \PS p, \div \vv_h) = (p - \PS p, \div \vv_h)
+ (\curl[\sigma_h-\sigma], \vv_h), \quad \vv_h \in \0\VV_h.
\end{equation*}
We choose 
$\vv \in \0H^1(\Omega;\R^2)$ such that $\div \vv = p_h - \PS p$ and
$\|\vv \|_1 \le C \|p_h - \PS p\|$, and take
$\vv_h = \PiV\vv$.  We have that $\div \vv = \div \PiV\vv$ and
$\|\PiV\vv\|_{\Hdiv} \le C \|\vv\|_1 \le C \|p_h - \PS p\|$, so
\begin{align*}
\|p_h - \PS p\|^2 &= (p_h - \PS p, \div \PiV\vv)
= (p - \PS p, \div \PiV\vv)
+ (\curl[\sigma_h-\sigma], \PiV\vv),
\\
&= (p - \PS p, p_h - \PS p) + (\curl[\sigma_h-\sigma], \PiV\vv - \vv)
+ (\sigma_h-\sigma, \rot \vv)
\\
&\le C(\|p - \PS p\| +  h \|\curl(\sigma_h-\sigma)\| + \|\sigma_h-\sigma\|)
\|p_h - \PS p\|.
\end{align*}
It easily follows that
\begin{equation*}
\|p - p_h\| \le C(\|p - \PS p\| + \|\sigma_h-\sigma\| 
+ h \|\curl(\sigma_h-\sigma)\|).
\end{equation*}
The theorem follows directly by applying Theorem~\ref{energy-rates-s}
and standard estimates for the error in the $L^2$ projection.
\end{proof}

A number of papers have been devoted to finite element approximation schemes
of either the vorticity-velocity-pressure or stream-function-vorticity
formulation of the Stokes problem.  
In particular, the lowest order ($r=1$) case of the method analyzed here was
discussed in \cite{d-s-s-1}, (in which additional references can also be
found).  In the case of the magnetic boundary conditions,
$\sigma=\uu\cdot\nn=0$, the authors established stability and first-order
convergence, which is optimal, for all variables.  But for the no-slip
boundary conditions $\uu=0$, with which we are concerned and which arise much
more commonly in Stokes flow, they observe in numerical experiments
stability problems and reduced rates of convergence which are in agreement
with the theory presented above.

We close with a simple numerical example in the case $r=2$ that demonstrates
that the suboptimal convergence orders obtained above are sharp even for very
smooth solutions.
Our discretization of the vorticity-velocity-pressure mixed formulation of the
Stokes problem then approximates the velocity $\uu$ by the second lowest order
Raviart--Thomas elements, the vorticity $\sigma$ by continuous piecewise
quadratic functions, and the pressure $p$ by discontinuous
piecewise linear functions.  We take $\Omega$ to be the unit square and compute
$\ff$ corresponding to the polynomial solution velocity field $\uu = (-2x^2
(x-1)^2 y (2y - 1) (y - 1), 2y^2 (y-1)^2 x (2x - 1)(x - 1))$, and pressure $p
= (x-1/2)^5 +(y-1/2)^5$.  The computations, summarized in Table~\ref{tb:t4},
indeed confirm the convergence rates established above,
i.e., $\uu_h$ converges with optimal order $2$ to $\uu$ in $L^2$, while the
approximations to $\sigma$ and $\curl \sigma$ are both suboptimal by $3/2$
order and the approximation to the pressure $p$ is suboptimal by $1/2$ order.
\begin{table}[!ht]
\footnotesize
\caption{$L^2$ errors and convergence rates for the mixed finite element
approximation of the Stokes problem for the vector Laplacian 
with boundary conditions $\uu \cdot \nn =0$, $\uu \cdot \ssb =0$
on the unit square.}
\label{tb:t4}
\begin{center}
\begin{tabular}{>{\centering}p{.85in}c|>{\centering}p{.85in}c|>{\centering}p{.85in}c|>{\centering}p{.85in}c}
$\|\uu-\uu_h\|$ & rate & $\|p-p_h\|$ & rate 
& $\|\sigma-\sigma_h\|$ & rate & $\|\curl(\sigma-\sigma_h)\|$ & rate\\[.05in]
\hline\vrule height.2in width0in
3.26e-04 & 1.9 & 2.34e-03 & 1.3 & 2.70e-03 & 1.3 & 1.67e-01 & 0.2 \\
8.35e-05 & 2.0 & 8.05e-04 & 1.5 & 9.70e-04 & 1.5 & 1.24e-01 & 0.4 \\
2.10e-05 & 2.0 & 2.74e-04 & 1.6 & 3.47e-04 & 1.5 & 8.96e-02 & 0.5 \\
5.27e-06 & 2.0 & 9.39e-05 & 1.6 & 1.24e-04 & 1.5 & 6.42e-02 & 0.5 \\
\end{tabular}
\end{center}
\end{table}

\bibliographystyle{amsplain}
\bibliography{vectorlaplacian}
\end{document}